\theoremstyle{plain}
\newtheorem{theorem}{Theorem}[section]
\newtheorem{thm}[theorem]{Theorem}
\newtheorem{prob}[theorem]{Problem}
\newtheorem{corollary}[theorem]{Corollary}
\newtheorem{lem}[theorem]{Lemma}
\theoremstyle{definition}
\newtheorem{defn}[theorem]{Definition}
\theoremstyle{remark}
\newtheorem{rem}[theorem]{Remark}
\newcommand{\R}{\mathbb{R}}
\newcommand{\C}{\mathbb{C}}
\newcommand{\D}{\mathbb{D}}
\newcommand{\N}{\mathbb{N}}
\subjclass[2010]{Primary 47A15; Secondary 30H10, 30H20.}
\begin{document}
\title[$M_{z^k}$]{A $z^k$-invariant subspace without the wandering property}
\author[Seco]{Daniel Seco}
\address{Instituto de Ciencias Matem\'aticas, Calle Nicol\'as Cabrera, UAM, 28049 Madrid,
Spain.} \email{dsf$\underline{\,\,\,}$cm@yahoo.es}
\date{\today}

\begin{abstract}
We study operators of multiplication by $z^k$ in Dirichlet-type
spaces $D_\alpha$. We establish the existence of $k$ and $\alpha$
for which some $z^k$-invariant subspaces of $D_\alpha$ do not
satisfy the wandering property. As a consequence of the proof, any
Dirichlet-type space accepts an equivalent norm under which the
wandering property fails for some space for the operator of
multiplication by $z^k$, for any $k \geq 6$.
\end{abstract}

\maketitle

\section{Introduction}\label{Intro}

In the present paper we will be concerned with (closed) subspaces of
so-called Dirichlet-type spaces, that remain invariant under the
action of the operator of multiplication by $z^k$, for some $k \in
\N$.

\begin{defn}
Let $\alpha \in \R$. We denote by $D_\alpha$ the
\emph{Dirichlet-type space} with parameter $\alpha$, defined as
\begin{eqnarray*}D_\alpha = \{f \in Hol(\D): f(z) = \sum_{k
=0}^{\infty} a_k z^k,\\ \|f\|^2_\alpha := \sum_{k=0}^{\infty}
|a_k|^2 (k+1)^{\alpha} < \infty \}.\end{eqnarray*}
\end{defn}

The particular case when $\alpha=-1$ is denoted by $A^2$, and is
often referred to as \emph{the Bergman space}. It consists of all
holomorphic functions over the unit disc $\D$ with square integrable
modulus with respect to the normalized Lebesgue area measure (with
density denoted $dA(z)$). For a function $f$ with Maclaurin
coefficients $\{a_k\}_{k \in \N}$, the norm satisfies the identity
\[\|f\|_{-1}^2=\sum_{k=0}^{\infty}
\frac{|a_k|^2}{k+1} = \int_{\D} |f(z)|^2dA(z). \] We refer to
\cite{DuS, HKZ} for further information on this space. Other
relevant values of $\alpha$ are 0 and 1, when the spaces are,
respectively, the Hardy space $H^2$ and the Dirichlet space $D$. See
\cite{Dur, Gar} for information on $H^2$ and \cite{ARSW, EFKMR,
Ross} for information on $D$.

The shift $S$ is the operator taking a function $f$ in $D_\alpha$ to
the function $Sf(z) = z f(z)$. The invariant subspaces for the shift
operator have attracted much interest from specialists, in part
because of the connections with the Hilbert invariant subspace
problem (see \cite{ABFP, Par}). We denote by $M_{g}$ the operator of
multiplication by an analytic function $g$. If $M_g$ is bounded we
say that $g$ is a multiplier (of $D_\alpha$). From now on, if the
value of $\alpha$ is fixed, we denote $\{ U \}_{g}$ the smallest
closed subspace of $D_\alpha$ containing the set of functions $U$
which is invariant under the action of the operator $M_{g}$. For
instance, both $A^2$ and $H^2$ have the space of bounded analytic
functions with the supremum norm as their space of multipliers,
while $D_\alpha$ spaces for $\alpha >1$ are closed under
multiplication. Neither of this is true for $D_\alpha$ whenever
$0<\alpha  \leq 1$. We say that $U$ \emph{generates} $M$ (under $g$)
if $\{ U \}_{g} = M$. The present article deals with invariant
subspaces under the action of $M_{z^k}=S^k$ for some $k \in \N$, and
in particular with the question of whether or not some special
subsets generate or not the whole subspace.

In \cite{ARS}, the authors found the correct generalization for
$A^2$ of a classical result of Beurling for the Hardy space $H^2$
(see \cite{Beu}):

\begin{thm}[Aleman, Richter, Sundberg]\label{ARSthm}
Let $M \subset A^2$ be a closed subspace invariant under $S$. Then
\[ M= \{ M \ominus SM \}_z.
\]
\end{thm}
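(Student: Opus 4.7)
The plan is to follow the Aleman--Richter--Sundberg argument, or equivalently the later axiomatic reformulation due to Shimorin. Write $W = M \ominus SM$ for the wandering subspace and $N = \{W\}_z \subseteq M$; the goal is to prove $N = M$.

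A preliminary observation is that the Bergman shift $S$ is bounded below on $A^2$: on monomials one has $\|S z^k\|_{-1}^2/\|z^k\|_{-1}^2 = (k+1)/(k+2) \geq 1/2$, so $\|Sf\|_{-1}^2 \geq \tfrac12 \|f\|_{-1}^2$ for every $f \in A^2$. Consequently $SM$ is closed and $W$ is a genuine orthogonal complement of $SM$ inside $M$. The main structural input is then a discrete concavity inequality for $S$:
\[
\|S^{n+1} f\|_{-1}^2 + \|S^{n-1} f\|_{-1}^2 \;\geq\; 2 \|S^n f\|_{-1}^2 \qquad (f \in A^2,\ n \geq 1),
\]
which follows from the termwise identity
\[
\frac{1}{k+n+2} - \frac{2}{k+n+1} + \frac{1}{k+n} \;=\; \frac{2}{(k+n)(k+n+1)(k+n+2)} \;\geq\; 0.
\]
Being purely norm-based, this inequality is inherited by the restriction $S|_M$.

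Combined with the trivial analyticity property $\bigcap_n S^n M = \{0\}$ (a nonzero analytic function cannot vanish to infinite order at the origin), this concavity places $S|_M$ in the class of operators covered by Shimorin's wandering-subspace theorem. That theorem yields, for every $f \in M$, a norm-convergent expansion $f = \sum_{n \geq 0} S^n w_n$ with $w_n \in W$, which forces $f \in N$ and hence $N = M$, as required.

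The genuine obstacle --- and the technical heart of the argument --- is precisely the passage from the scalar concavity inequality to such a norm-convergent decomposition. Because $W$ can be infinite-dimensional, there is no Beurling-type inner-function representation to fall back on; instead one constructs a biorthogonal family adapted to the subspaces $S^n W$ and uses the concavity relation to estimate the tails of the partial sums $\sum_{n=0}^N S^n w_n$, showing that the remainders tend to zero in $A^2$. Making this estimate work is an induction that exploits the specific analytic structure of the Bergman space in a nontrivial way, and this is where the substance of the proof resides.
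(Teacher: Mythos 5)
This theorem is not proved in the paper; it is quoted from the original reference (Aleman, Richter, Sundberg, \emph{Acta Math.} 1996), so there is no in-paper proof to compare against. That said, your proposal as written has a genuine gap.

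The termwise identity and the resulting inequality $\|S^{n+1}f\|_{-1}^2 + \|S^{n-1}f\|_{-1}^2 \geq 2\|S^nf\|_{-1}^2$ are correct, but note the direction: this says the sequence $n \mapsto \|S^n f\|_{-1}^2$ is \emph{convex}, not concave (the 2-hyperexpansive, or genuinely concave, operators such as the Dirichlet shift satisfy the reverse inequality and are covered by Richter's earlier theorem). More to the point, this orbit-convexity is \emph{not} the hypothesis of Shimorin's wandering subspace theorem. What Shimorin actually needs is the two-vector inequality
\[
\|Sf + g\|_{-1}^2 \;\leq\; 2\bigl(\|f\|_{-1}^2 + \|Sg\|_{-1}^2\bigr) \qquad (f,g \in A^2),
\]
of which your inequality is only the special case $g = Sf$. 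The two-vector inequality is strictly stronger, and it is exactly the extra coupling between distinct vectors that makes his argument close; orbit-convexity alone is not known (and is not expected) to imply the wandering subspace property for left-invertible operators, so you cannot conclude that $S|_M$ lies ``in the class of operators covered by Shimorin's wandering-subspace theorem.'' To repair the argument you would need to verify the full two-vector inequality for $S$ on $A^2$ (it does hold, with equality in the worst case, by an AM--GM split applied coefficientwise with the optimal parameter $t=(k+3)/(k+1)$) and then invoke Shimorin correctly.

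Finally, the closing paragraph acknowledges that ``the technical heart of the argument'' is unaddressed and gestures at ``constructing a biorthogonal family adapted to $S^nW$.'' Neither the original Aleman--Richter--Sundberg proof (which is built on extremal functions for invariant subspaces and the biharmonic Green function of the disc) nor Shimorin's proof (which runs entirely through the two-vector inequality via contractivity of certain truncation operators) proceeds via biorthogonal families, so this part of the proposal is not a sketch of a known argument but a placeholder for a missing one.
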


This is, however, a more intricate situation than that of $H^2$
where each invariant subspace is generated by a single function.
Examples of invariant subspaces not generated by a single element
can be found in \cite{Hed}. Theorem \ref{ARSthm} justifies the
following definition:

\begin{defn} Let $g$ be a multiplier in $D_\alpha$, and let $M \subset D_\alpha$ be a closed subspace that is invariant under
the operator $M_g$. We say that $M$ has the \emph{wandering
property} (relative to $g$ in $D_\alpha$) if \[M= \{ M \ominus gM
\}_g.\]\end{defn}

Shimorin (\cite{Shim}) generalized Aleman-Richter-Sundberg's result
to the shift in Dirichlet-type spaces for $-1 \leq \alpha \leq 1$
and to other particular operators. See also \cite{RaR}. In the paper
\cite{Netal}, the failure of the wandering property is shown for the
shift in the spaces $D_\alpha$ for $\alpha \leq -5$, although these
spaces are equipped with a different equivalent norm, arising from
an integral representation. The result for $\alpha < -5$ is an
application of other results by \cite{HZFail}. In the interesting
articles \cite{KLS, CDS}, the authors consider the question of
generalizing Theorem \ref{ARSthm} to other bounded multiplication
operators in $H^2$ and $A^2$ (respectively). In the latter, the
following problem is proposed:

\begin{prob}\label{prob1}
Let $g \in H^{\infty}$ and consider $M_g$ acting on $A^2$. Assume $M
\subset A^2$ is a closed subspace invariant under $M_g$. Is it true
then that $M$ satisfies the wandering property?
\end{prob}

The question has been studied in several papers including \cite{CDS}
and \cite{CaW}, and even though some counterexamples have been
found, the problem remains open even for the case when $g(z)= z^k$
for some $k \in \N$, $k \geq 2$. The latter is mentioned
specifically as an open problem, already on \cite{CDS}.

The purpose of this article is to construct $z^k$-invariant
subspaces of $D_\alpha$ spaces that do not satisfy the wandering
property. By doing this, we solve an analogue of Problem \ref{prob1}
in those spaces. Our main result is the following:

\begin{thm}\label{main}
There exists $\epsilon >0$ such that for all $\alpha \in
(-16-\epsilon, -16 + \epsilon)$ there exists a closed subspace $M
\subset D_\alpha$ invariant under multiplication by $z^6$, but
without the wandering property.
\end{thm}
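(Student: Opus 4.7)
The plan is to build, for each $\alpha$ near $-16$, a singly generated $z^6$-invariant subspace $M = \{f\}_{z^6} \subset D_\alpha$ whose wandering subspace $W := M \ominus z^6 M$ fails to generate $M$ under $M_{z^6}$; that is, $\{W\}_{z^6} \subsetneq M$. I would first exploit the orthogonal decomposition $D_\alpha = \bigoplus_{j=0}^{5} D_\alpha^{(j)}$, where $D_\alpha^{(j)} = \overline{\mathrm{span}}\{z^{6n+j} : n \geq 0\}$. Each summand is $M_{z^6}$-invariant, and $M_{z^6}$ acts on it as a weighted unilateral shift. Any $M$ that splits orthogonally across these branches tends to satisfy the wandering property branch by branch (a single weighted shift being a rigid object accessible via Shimorin's methods), so the counterexample must genuinely couple at least two residue classes modulo $6$. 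Accordingly I would take $f$ to be a polynomial whose Maclaurin support meets at least two such classes, of degree small enough that the orthogonality relations among the iterates $\{z^{6n} f\}_{n \geq 0}$ can be tracked combinatorially.

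Next I would describe $W$ and $\{W\}_{z^6}$ explicitly. Since $M$ is cyclic under $M_{z^6}$, the subspace $W$ can be obtained by applying Gram--Schmidt to the (typically non-orthogonal) system $\{z^{6n} f\}_{n \geq 0}$, whose Gram matrix is banded for a polynomial $f$, with entries given by explicit expressions in the weights $(k+1)^\alpha$. Failure of the wandering property is then equivalent to producing a nonzero $h \in M$ with $\langle h, z^{6n} w\rangle_\alpha = 0$ for every $w \in W$ and every $n \geq 0$. For a polynomial ansatz $h$ of bounded degree, these orthogonality conditions reduce to finitely many, yielding a finite linear system whose matrix is analytic in $\alpha$.

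The core of the argument is to verify by explicit calculation that at $\alpha = -16$ this system admits a nontrivial solution $h \in M$, and that a structural nondegeneracy makes such a solution persist on an open neighborhood of $-16$. The value $-16$ would enter as the root of the determinantal obstruction specific to $k = 6$ and to the chosen generator $f$. I expect the principal obstacle to be the identification of a workable $f$ and of an ansatz for $h$ so that the relevant determinant genuinely vanishes at $\alpha = -16$ while $h(\alpha)$ lies in $M(\alpha) \setminus \{W(\alpha)\}_{z^6}$; this amounts to an explicit inner-product computation in the $D_\alpha$-norm. Continuity in $\alpha$ of the finite-dimensional data then propagates the non-vanishing of $h$ to some open interval $(-16 - \epsilon, -16 + \epsilon)$, completing the proof.
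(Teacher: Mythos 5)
Your plan is built around a singly generated $M = \{f\}_{z^6}$, but this is a structural mismatch with what actually works here. For a cyclic $M_{z^6}$-invariant subspace, $z^6 M = \overline{\mathrm{span}}\{z^{6n}f : n \geq 1\}$ has codimension at most one in $M$, so the wandering subspace $W = M \ominus z^6 M$ is at most one-dimensional, and the wandering property becomes the question of whether the single vector $w$ spanning $W$ is a cyclic vector for $M_{z^6}$ restricted to $M$. The paper instead takes $M = \{F_1, F_2\}_{z^6}$ with two carefully chosen polynomial generators, arranges (via the ``register'' coefficients $a_4$, $b_5$) that every $f \in M$ decomposes uniquely as $f_1(z^6)F_1 + f_2(z^6)F_2$, and thereby obtains a two-dimensional wandering subspace spanned by $F_2$ and a corrected $F_3 = (1 - \tfrac{z^6 A_{1,5}\overline{A_{1,2}}}{|A_{1,2}|^2 - A_{1,3}A_{1,4}})F_1 + (\cdots)F_2$. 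The extra generator is not decoration: it is what supplies enough degrees of freedom to satisfy the vanishing conditions $A_{1,1} = A_{2,1} = A_{3,1} = A_{2,5} = A_{3,5} = 0$ that truncate the recurrence, while still leaving room for the decisive strict inequality. It is far from clear that a one-generator ansatz could support an analogous truncation, and you give no mechanism for it.

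Your picture of how $\alpha = -16$ enters and how $\epsilon$ is obtained is also off. You expect $-16$ to be a root of a determinantal obstruction, with persistence coming from ``structural nondegeneracy.'' But if the existence of the nontrivial $h$ hinged on a determinant vanishing exactly at $\alpha = -16$, a generic perturbation of $\alpha$ would destroy it, not preserve it. In the paper the determinantal/linear conditions on $(a_i, b_i)$ are solvable for every $\alpha$ (away from $0,1$ and degeneracies of the $3\times 3$ matrix $N_1$); what is verified numerically at $\alpha = -16$ is an \emph{open} condition, namely $A_{1,3}A_{1,4} - |A_{1,2}|^2 < |A_{1,5}A_{1,2}|$, and openness plus continuity of the minors of $N$ in $\alpha$ immediately gives the interval. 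This distinction matters, because without it you have no reliable route to the $\epsilon$.

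Finally, your criterion for failure --- exhibit a nonzero $h \in M$ orthogonal to all $z^{6n}w$ --- is correct in principle but, taken literally, is an infinite set of conditions. The paper sidesteps this with a clean reduction: if $F_1 \in \{M \ominus z^6 M\}_{z^6}$, then a factor of the form $z^6 + c$ would have to be cyclic for the ordinary shift on $D_\alpha$, forcing $|c| \geq 1$; the construction is arranged so that $|c| < 1$, giving the contradiction. That cyclicity argument is what converts the infinite orthogonality system into a finite, checkable inequality, and it is the step your outline is missing.
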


Notice that the wandering property depends a priori on the choice of
equivalent norm for the ambient space $D_\alpha$, even though the
property of invariance is independent of this choice. This is in
fact, a key feature and as a corollary of the proof of our main
theorem, we will obtain the following result:

\begin{corollary}\label{coro}
Let $k \geq 6$. There exist two polynomials $F_1$ and $F_2$ such
that for each value of $\alpha$, the space $D_\alpha$ admits a
choice of equivalent norm under which the invariant subspace
generated by $F_1$ and $F_2$ under multiplication by $z^k$ does not
have the wandering property.\end{corollary}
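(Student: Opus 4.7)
My plan is to derive Corollary \ref{coro} directly from the construction used to prove Theorem \ref{main}, by exploiting the flexibility of equivalent norms on $D_\alpha$. First I would inspect the main theorem's proof and isolate the explicit pair of polynomials $F_1, F_2$ whose $z^6$-invariant hull fails the wandering property at $\alpha \approx -16$, together with the finite quantitative obstruction certifying that failure. Such an obstruction typically takes the form of a rank or distance inequality involving weighted inner products of a bounded family of shifts $z^{jk}F_i$, so it depends only on finitely many weight values.

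Next, given an arbitrary $\alpha \in \R$, I would manufacture an equivalent norm on $D_\alpha$ by prescribing a weight sequence $\{w_j\}_{j\geq 0}$ with $w_j \asymp (j+1)^\alpha$, so that equivalence with $\|\cdot\|_\alpha$ is automatic. On the finite range of indices relevant to the obstruction I would choose the $w_j$ so that the weighted inner products reproduce, up to a global scaling, those of $D_{-16}$; on the remaining indices I would set $w_j = (j+1)^\alpha$. Since only finitely many weights deviate from the standard ones, the two norms are equivalent. The invariant subspace generated by $F_1, F_2$ under $M_{z^k}$ is set-theoretically unchanged, but $M \ominus z^k M$ and the associated distances now match the $D_{-16}$ computation, so the same obstruction rules out the wandering property for this equivalent norm.

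To pass from the specific value $k=6$ of Theorem \ref{main} to every $k\geq 6$ of the corollary, I would adapt the polynomials $F_1, F_2$ to a new pair $F_1^{(k)}, F_2^{(k)}$, for instance by replacing the occurrences of $z^6$ that encode the group action with $z^k$, or by padding with $z$-factors, in a way that preserves the algebraic structure responsible for the finite obstruction. The wandering test then becomes the same quantitative inequality, now indexed by the shifts of $M_{z^k}$.

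The main obstacle I foresee is that $\{M\ominus z^k M\}_{z^k}$ is a \emph{closed} span of all shifts $z^{jk}(M\ominus z^k M)$, so in principle the high-degree tail of the norm could compensate for the low-degree deficit and restore membership of the target function. To overcome this I would use the polynomial nature of $F_1, F_2$: the Taylor support of each $z^{jk}F_i$ lies in a window of width $\max(\deg F_1, \deg F_2)$, so the failure is localized index-by-index, and the tail weights $w_j = (j+1)^\alpha$ contribute only uniformly controlled perturbations. A density/compactness argument along partial sums should then promote the finite obstruction to a bound on the distance of some $g\in M$ from the full closed span $\{M\ominus z^kM\}_{z^k}$, thereby yielding the failure of the wandering property in $(D_\alpha,\|\cdot\|_\star)$.
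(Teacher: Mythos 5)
Your proposal matches the paper's route quite closely: for a fixed $k\ge 6$, keep the explicit polynomial pair built for $(\alpha,k)=(-16,6)$ (with the higher block of coefficients of $F_1$ placed at degrees $k,\dots,k+3$), and for any target $\alpha$ replace the finitely many weights $\omega_j$ that enter the quantitative certificate (essentially the $3\times 4$ matrix $N$, plus the two entries at $k+4$ and $k+5$ that appear through $a_4,b_5$) by their $D_{-16}$ values, leaving the rest of the sequence as $(j+1)^\alpha$. Changing finitely many weights yields an equivalent norm, and the entire sufficient-condition chain from Section~\ref{promain1} then reproduces the $D_{-16}$ computation verbatim. This is precisely what the paper does in Section~\ref{pronew}.

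The one place where you diverge, and where your proposal risks a detour, is your last paragraph. You treat the passage from the finite-index obstruction to the \emph{closed} span $\{M\ominus z^kM\}_{z^k}$ as an obstacle requiring a new density/compactness argument. In the paper this passage is already built into Lemma~\ref{thm7}: the obstruction is encoded in the single scalar $c$, the inequality $|c|<1$ forces $z^k+c$ to have a zero in $\D$, and a function with an interior zero cannot be cyclic for the shift. Because cyclicity is a property of closed invariant subspaces (hence of the topology), it is invariant under the passage to an equivalent norm, so the conclusion transfers automatically; no additional analysis of the high-degree tail is needed. If you try instead to control the tail contributions directly, you would be re-proving (in a less transparent way) what the cyclicity criterion already gives you, and the estimates could be painful for $\alpha\gg 0$ where the tail weights are large. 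So the strategy is right and would lead to a correct proof; you would just want to recognize that the closure step is already handled by Lemma~\ref{thm7} rather than devise a separate compactness argument.
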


We will complete this with a proof that the behavior for large
enough Dirichlet-type spaces (with their usual norms) is consistent
with this:

\begin{thm}\label{9k28}
Let $k \geq 10, k \in \N$. For each $\alpha <
-(5k+\frac{700}{(k-9)^2})$, $D_\alpha$ contains a $z^k$ invariant
subspace without the wandering property.
\end{thm}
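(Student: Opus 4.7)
The plan is to adapt the construction underlying Theorem \ref{main} and Corollary \ref{coro}. Fix $k \geq 10$ and take the two polynomials $F_1, F_2$ provided by Corollary \ref{coro}. Set $M := \{F_1, F_2\}_{z^k} \subset D_\alpha$ and let $W := M \ominus z^k M$, the wandering subspace computed in the standard inner product of $D_\alpha$. Failure of the wandering property amounts to exhibiting a nonzero element of $M \ominus \{W\}_{z^k}$, equivalently a continuous linear functional on $M$ that vanishes on $\{W\}_{z^k}$ but not on all of $M$.

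Following the approach of Corollary \ref{coro}, this question reduces to an algebraic inequality involving the Gram matrix of the family $\{z^{nk}F_1, z^{nk}F_2\}_{n \geq 0}$ in $D_\alpha$. Since $F_1, F_2$ are fixed polynomials (independent of $\alpha$), each Gram entry is a finite weighted sum of $(j+1)^\alpha$ over indices $j$ determined by the monomial structure of $F_1, F_2$ shifted by a multiple of $k$. For the usual norm with $\alpha$ very negative, the low-degree diagonal entries dominate, while the off-diagonal terms and tails are controlled by sums of the form $\sum_{n \geq 1}(nk+c)^\alpha$, which decay rapidly once $\alpha$ is a sufficiently large negative multiple of $k$. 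One then seeks to verify that, at the given $\alpha$, the inequality guaranteeing $M \ominus \{W\}_{z^k} \neq \{0\}$ holds, thereby certifying the existence of a function that cannot be reached from $W$ by $z^k$-iteration.

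Solving this quantitative inequality should yield the explicit threshold $\alpha < -\bigl(5k + \tfrac{700}{(k-9)^2}\bigr)$. Heuristically, the linear term $5k$ makes the off-diagonal Gram contributions sufficiently small relative to the diagonal block, while the correction $\tfrac{700}{(k-9)^2}$ absorbs a combinatorial overhead from the finite-dimensional block generated by the low-degree monomials of $F_1, F_2$; this correction blows up as $k \to 9$, which is consistent with our hypothesis $k \geq 10$. The principal obstacle is the sharp bookkeeping needed to translate the abstract failure of wandering from Corollary \ref{coro} into a concrete numerical threshold: one must bound the Gram-matrix tails uniformly in $n$ and estimate the low-degree block well enough that a single inequality captures both pieces. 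Once the tail estimate is in place, the failure of wandering transfers from the equivalent-norm construction of Corollary \ref{coro} to the standard norm of $D_\alpha$, completing the proof.
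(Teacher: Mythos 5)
Your proposal has a fundamental conceptual error, and beyond that it never actually performs the estimates needed to get a concrete threshold.

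The central problem is the sentence ``Once the tail estimate is in place, the failure of wandering transfers from the equivalent-norm construction of Corollary \ref{coro} to the standard norm of $D_\alpha$.'' That transfer does not exist: the wandering property depends on the choice of equivalent norm, which is the entire point of Corollary \ref{coro}. The polynomials $F_1, F_2$ in that corollary were built to work with a norm whose weights $\omega_t$ have been overwritten (in twelve places) by the $D_{-16}$ values; they have no reason to exhibit failure of wandering in the \emph{standard} weights $(t+1)^\alpha$. If you want Theorem \ref{9k28} you must re-derive $F_1, F_2$ for each admissible $\alpha$, not reuse the ones from Corollary \ref{coro}. What the paper actually does is re-run the Section \ref{promain2} machinery with the standard weights $\omega_t=(t+1)^\alpha$ and the fixed choice $d_1=d_2=d_3=1$, derive explicit bounds on the quantities $E_i$, $G_i$, and the determinant of $N_1$ via Cramer's rule and the estimate $(1-s)p_{1,1}^\alpha \le |E_1| \le 3 p_{1,1}^\alpha/d$ (and analogues), and then show that the third objective function $B_2 = 4C_2 C_4$ satisfies $4C_2C_4 < \frac{432\beta^2}{(1-s)^4 k^2} a^\beta$ for an explicit $a=a(k)$ with $a(k)<1$ when $k\ge 10$, so that $a^\beta$ decays exponentially in $\beta=|\alpha|$.

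The second gap is that your argument never produces the inequality it claims to solve. You describe a ``Gram matrix'' and a ``quantitative inequality'' and then assert that ``solving this... should yield'' the threshold $5k + 700/(k-9)^2$, with a heuristic interpretation of the two terms. That is a restatement of the goal, not a derivation. In the paper the $5k$ comes from checking that $s=0.985$ and $\beta=5k$ satisfy both the constraint \eqref{eqn702} (which guarantees $|E_i|\ge (1-s)p_{1,1}^\alpha d$) and the bound $\frac{432\beta^2}{(1-s)^4 k^2}a^\beta<1$ once $k\ge 18$; the correction $700/(k-9)^2$ is then an envelope for the tabulated solutions at $k=10,\dots,17$, where a single choice of $(s,\beta)$ does not work and separate values must be exhibited. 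Your heuristic about ``combinatorial overhead from the low-degree block'' does not reflect this. To make your approach rigorous you would need to actually set up the optimization $4C_2C_4<1$ with the standard weights, derive the Cramer-rule bounds on $E_i$ and $G_i$ uniformly in $k$ and $\alpha$, and then verify the exponential decay condition; none of this is present in the proposal.
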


Although this result may seem stronger than Theorem \ref{main}, we
consider the techniques used to establish Theorem \ref{main} to be
more capable of dealing with more general problems. Our numerical
evidence suggests that the optimal threshold for each $k \geq 10$
(and in fact, $k \geq 6$) is much higher than that of Theorem
\ref{9k28}, probably increasing with $k$, and staying somewhere
between $-5$ and $0$ for all values of $k$.

Notice that in $H^2$ with the classical norm, the $z^k$ wandering
property holds for $k \geq 1$, since the powers of the shift all
behave like the shift itself (for instance, Shimorin's proof for
$k=1$ works for all $k$).

\begin{rem}
In the forthcoming paper \cite{GGPS}, we will show among other
results that for each $k \geq 1$ and each negative $\alpha \geq
\frac{\log(2)}{\log(k+1)}$, the $D_\alpha$ spaces with their usual
norms do have the $z^k$ wandering property. In fact, for any $\alpha
<0$ and $k \geq 1$, there exists a choice of optimal norm under
which the wandering property holds for $z^k$. Therefore the norm
dependence is absolutely relevant to the problem.
\end{rem}

In Section \ref{promain1}, we provide sufficient conditions on a
pair of polynomial functions $F_1$ and $F_2$ and on the ambient
space $D_\alpha$ for the subspace $M$ generated by $F_1$ and $F_2$
to satisfy the claim in Theorem \ref{main}. Then, in Section
\ref{promain2}, we show how to arrive to a choice of $F_1$ and $F_2$
that satisfies such sufficient conditions, for the case $\alpha =
-16$ and $k=6$. One of the conditions will be open and depend
continuously on the parameters, and as a consequence there will
exist an interval of possible values of $\alpha$ for which the same
counterexample will work, therefore proving Theorem \ref{main}. We
also provide, in Section \ref{pronew}, the proof of Corollary
\ref{coro} and, in Section \ref{pronew2}, that of Theorem
\ref{9k28}. The values of $\alpha$ and $k$ above may seem arbitrary
but the method will give a clear idea of how $k$ needs to be at
least 6. For other values of $\alpha$ we have performed numerical
computations, finding evidence suggesting that for all $\alpha \leq
\alpha_0=-4.999$ there exist invariant subspaces without the
wandering property for $z^6$. Changing $k$ to a larger value
requires a small modification of our method, but it allows to extend
the bound on $\alpha$ to $\alpha_1=-4.2$. We present these numerical
results in Section \ref{numerics}. We will conclude with some
further remarks in Section \ref{rema}.

\section{Sufficient conditions for the failure of the wandering property}\label{promain1}

We denote by $\langle,\rangle$ the inner product in $D_\alpha$, by
$\omega_k=(k+1)^{\alpha}$ and given $h \in Hol(\D)$ and $s \in \N$,
$\hat{h}(s)$ will be the Taylor coefficient (always centered at 0)
of order $s$ of $h$.

Much of what we will do in this Section is applicable to any $k \geq
6$ and any Dirichlet-type space excepting $H^2$ and $D$. Those
exceptions are the only ones in which $\omega_k$ is an affine
function of $k$, which will make certain linear system become
incompatible. Thus we take $\alpha \in \R \backslash \{0,1\}$. Later
we will concentrate on the case $\alpha=-16$ and $k=6$.

The space $M$ we will construct is the smallest closed subspace of
$D_\alpha$ invariant under multiplication by $z^k$ and containing
two functions $F_1$ and $F_2$, which are taken to be polynomials.
That is
\begin{equation}\label{eqn300}M=\{F_1,F_2\}_{z^k}.\end{equation}

Our objective is to show the existence of spaces $M$ as above for
which the wandering property fails, that is, spaces $M$ that satisfy
\begin{equation}\label{eqn301}
\{M \ominus z^k M\}_{z^k} \neq M.
\end{equation}

By the definition of $M$ in \eqref{eqn300}, $M$ is invariant under
$z^k$, and hence $M$ satisfying \eqref{eqn301} may only exist
provided that $M$ contains at least one function not in $\{M \ominus
z^k M\}_{z^k}$. If such a function exists, one of the generators
must be an example. In fact, we will construct $F_1$ and $F_2$ such
that $F_1$ will belong to $(M \backslash \{M \ominus z^k
M\}_{z^k})$.

We could take a very general polynomial function and find what is
exactly needed. This is not our approach, which focus on taking only
a few degrees of freedom so that equations become solvable in finite
time, explicitly.

Let us define $F_1$ by
\begin{equation}\label{defF1}
F_1(z)= \sum_{i=0}^4 a_i z^i + \sum_{i=0}^3 a_{k+i} z^{k+i},
\end{equation}
and $F_2$, by
\begin{equation}\label{defF2}
F_2(z) = \sum_{i=0}^3 b_i z^i + b_5 z^5,
\end{equation}
where $a_i, b_i$ are complex constant coefficients to be determined
later in terms of $k$ and $\alpha$, and where $a_4$ and $b_5$ are
non-null.

\begin{rem}
Notice that $a_4$ and $b_5$ will play a different role than other
parameters and that they are the only coefficients whose degree is
not congruent with 0, 1, 2 or $3 \mod k$.
\end{rem}

This special role of $a_4$ and $b_5$ is justified by the following
result which will allow us to make use of a form of Fourier analysis
on the elements of $M$.

\begin{lem}\label{Lemma1} Let $M$ be as in \eqref{eqn300}, where
$F_1$ and $F_2$ are defined as in \eqref{defF1} and \eqref{defF2},
and let $a_4 \neq 0 \neq b_5$. Then
\[M = \{f(z)=f_1(z^k)F_1(z) + f_2(z^k)F_2(z): f_1, f_2 \in D_\alpha\}.\]
\end{lem}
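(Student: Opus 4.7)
The plan is to present $M$ as the range of a single bounded linear map
\[
T\colon D_\alpha\oplus D_\alpha \to D_\alpha,\qquad T(f_1,f_2) = f_1(z^k)F_1(z)+f_2(z^k)F_2(z),
\]
and to call this range $N$. I will first verify that $T$ is continuous and bounded below, concluding that $N$ is closed, and then check separately that $M\subset N$ and $N\subset M$.

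Boundedness is the easier half: the composition operator $h(w)\mapsto h(z^k)$ is continuous on $D_\alpha$, because $\|h(z^k)\|_\alpha^2=\sum_m |\hat h(m)|^2(mk+1)^\alpha$, and the ratio $(mk+1)^\alpha/(m+1)^\alpha$ is bounded uniformly in $m$. Since $F_1$ and $F_2$ are polynomials, multiplication by $F_i$ is bounded on $D_\alpha$, so $T$ is bounded.

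The bounded-below estimate is the heart of the proof and is where the coefficients $a_4$ and $b_5$ play their distinguished role. The exponents in $F_1$ are $\{0,1,2,3,4,k,k+1,k+2,k+3\}$, giving residues $\{0,1,2,3,4\}\pmod k$, while those of $F_2$ give residues $\{0,1,2,3,5\}\pmod k$ (using $k\ge 6$). Residue $4$ occurs in $F_1$ only via the monomial $a_4 z^4$, and does not occur in $F_2$ at all, so the coefficient of $z^{mk+4}$ in $T(f_1,f_2)$ is exactly $a_4\,\widehat{f_1}(m)$; analogously, the coefficient of $z^{mk+5}$ is precisely $b_5\,\widehat{f_2}(m)$. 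Retaining only these terms in the squared norm gives
\[
\|T(f_1,f_2)\|_\alpha^2 \ge |a_4|^2\sum_m |\widehat{f_1}(m)|^2(mk+5)^\alpha + |b_5|^2\sum_m |\widehat{f_2}(m)|^2(mk+6)^\alpha,
\]
and the comparability of $(mk+5)^\alpha$ and $(mk+6)^\alpha$ with $(m+1)^\alpha$ yields $\|T(f_1,f_2)\|_\alpha^2\gtrsim \|f_1\|_\alpha^2+\|f_2\|_\alpha^2$. Hence $T$ has closed range, so $N$ is a closed subspace of $D_\alpha$.

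To conclude, $N$ plainly contains $F_1$ and $F_2$, and is $M_{z^k}$-invariant, since $z^k\cdot f_i(z^k)=g_i(z^k)$ with $g_i(w)=wf_i(w)\in D_\alpha$; by minimality of $M$ this gives $M\subset N$. Conversely, $M$ contains $p(z^k)F_1 + q(z^k)F_2$ for every pair of polynomials $p, q$; since polynomials are dense in $D_\alpha$ and $T$ is continuous, every element of $N$ is a $D_\alpha$-limit of such combinations, and the closedness of $M$ yields $N\subset M$. The main obstacle is the bounded-below step, and it is precisely the hypothesis $k\ge 6$ that makes residues $4$ and $5$ fall outside the ``common'' residue classes $\{0,1,2,3\}$ shared by both $F_1$ and $F_2$, thereby isolating the contributions of $a_4$ and $b_5$.
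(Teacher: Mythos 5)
Your proposal is correct and uses the same key idea as the paper: isolating the residue classes $4$ and $5$ modulo $k$ (which only $a_4z^4$ and $b_5z^5$ can hit, since $k\geq 6$) to recover $f_1,f_2$ from $f$, together with density of polynomials and closedness of $M$ for the converse inclusion. The only difference is stylistic: you package the residue-isolation estimate as a bounded-below operator $T$ whose range is therefore closed, and then invoke minimality of $M$; the paper instead argues by contradiction that $f_1\notin D_\alpha$ would force $f\notin D_\alpha$, leaving the closedness of the right-hand side slightly more implicit. Your version makes that step explicit, which is a small improvement in rigor but not a genuinely different route.
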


\begin{proof}
Denote by $M_0$ the right-hand side in the statement of the Lemma.
To see $M_0 \subset M$, notice that $F_1, F_2$ are polynomials and
hence, multipliers in $A^2$. Baring this in mind, let $f \in M_0$ be
given by the functions $f_1$ and $f_2$ in $D_\alpha$ in the same way
as in the description of $M$ in the statement, and denote the
sequence of Taylor polynomials of $f_1$ as $\{p_n\}_{n \in \N}$ and
those of $f_2$ by $\{q_n\}_{n \in \N}$. Then $g_n(z) =
p_n(z^k)F_1(z) + q_n(z^k) F_2(z)$ is in $M$ for all $n \in \N$ and
$g_n$ converges in $D_\alpha$ norm to $f$. Since $M$ is a closed
subspace, $f \in M$.

On the other hand, to see the other inclusion, let $f(z) = f_1(z^k)
F_1(z) + f_2(z^k) F_2(z)$ and suppose, firstly, that $f_1 \notin
D_\alpha$. Then the Taylor coefficients of $f$ of order $kt+4$, $t
\in \N$, must be \[\hat{f}(kt+4)= \hat{f_1}(t) a_4.\] Recall that we
took $a_4 \neq 0$, and by our hypothesis
\[\|f_1\|^2_\alpha = \sum_{t \in \N} |\hat{f_1}(t)|^2 (t+1)^{\alpha}
= + \infty.\] The necessary conclusion is that \[\|f\|^2_{\alpha}
\geq \sum_{s \in k\N +4} |\hat{f}(s)|^2(s+1)^{\alpha} = +\infty,\]
which is a contradiction with the assumption that $f \in M \subset
D_\alpha$. The same argument works for $f_2$ instead of $f_1$
replacing $a_4$ by $b_5$ and $kt+4$ by $kt+5$ where necessary,
showing that if $f_2 \notin D_\alpha$ then $f \notin M$, which
finishes the proof of the Lemma.
\end{proof}

The proof of Lemma \ref{Lemma1} tells us that we can recover, for
any $f \in M$, the sequence $\{\hat{f_1}(t)\}_{t \in \N}$ and
$\{\hat{f_2}(t)\}_{t \in \N}$ that serve as coordinates of $f$ as an
element of $M$. We may think of such coefficients as the
\emph{Fourier coefficients} of $f$, and of $f_1$ and $f_2$ as a form
of \emph{Fourier transform} of $f$. This will be the main role of
$a_4$ and $b_5$ in our construction. Going forward, our next task
will be to understand which functions belong to $M \ominus z^k M$
depending on the values of the coefficients $a_i$ and $b_i$. Suppose
that $f \in M \ominus z^k M$. Then, for any $s \in \N$, $s \geq 1$,
and for $j=1,2$ we must have
\[\langle f , z^{ks}  F_j \rangle =0.\]
This is, in fact, easily seen to be a characterization of the
elements in $M \ominus z^k M$. It turns out to be useful to express
these relations in terms of the Fourier coefficients
$\{\hat{f_i}(t)\}_{n \in \N, i=1,2}$. We introduce the following
notation:
\begin{defn}\label{notation} We will denote
\begin{itemize}
\item[(A1)] \[A_{s,1} = \langle z^{k(s-1)}F_1, z^{ks}F_1 \rangle = \sum_{h=0}^3 \overline{a_h}
a_{k+h} \omega_{ks+h}.\]
\item[(A2)] \[A_{s,2} = \langle z^{ks} F_1 , z^{ks}F_2 \rangle.\]
\item[(A3)] \[A_{s,3} = \|z^{ks} F_1\|^2_{\alpha}.\]
\item[(A4)] \[A_{s,4} = \|z^{ks} F_2\|^2_{\alpha}.\]
\item[(A5)] \[A_{s,5} = \langle z^{k(s-1)}F_1, z^{ks}F_2 \rangle = \sum_{h=0}^3 \overline{b_h}
a_{k+h} \omega_{ks+h}.\]
 \end{itemize}
\end{defn}
The basic property that they will satisfy is a recurrence relation:
\begin{lem}\label{Lemma3}
Let $f(z)= f_1(z^k) F_1(z) + f_2(z^k) F_2(z)$ for some functions
$f_1, f_2 \in D_\alpha$. Then $f \in M \ominus z^k M$ if and only if
for all $s \geq 1$ we have both:
\begin{itemize}
\item[(a)] \begin{eqnarray*}0 = \hat{f_1}(s+1) \overline{A_{s+1,1}} + \hat{f_2}(s+1) \overline{A_{s+1,5}}+ \\ + \hat{f_1}(s)
A_{s,3} + \hat{f_2}(s) \overline{A_{s,2}} +  \hat{f_1}(s-1)
A_{s,1}.\end{eqnarray*}
\item[(b)] \[0 = \hat{f_1}(s) A_{s,2} + \hat{f_2}(s) A_{s,4} + \hat{f_1}(s-1)
A_{s,5}.\]
\end{itemize}
\end{lem}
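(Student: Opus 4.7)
The plan is as follows. By Lemma \ref{Lemma1}, every $f \in M$ has a unique representation $f = f_1(z^k) F_1 + f_2(z^k) F_2$ with $f_1, f_2 \in D_\alpha$, and $z^k M$ is the closure of the linear span of $\{z^{ks}F_j : s \geq 1,\ j \in \{1,2\}\}$. Hence $f \in M \ominus z^k M$ if and only if $\langle f, z^{ks} F_j \rangle = 0$ for every $s \geq 1$ and every $j \in \{1,2\}$, and the task reduces to expanding each such inner product and identifying which cross-terms survive the mutual orthogonality of the monomials $\{z^n\}$ in $D_\alpha$.

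Linearity and continuity of the inner product yield
\[\langle f, z^{ks} F_j \rangle = \sum_{t \geq 0} \hat{f_1}(t) \langle z^{kt} F_1, z^{ks} F_j \rangle + \sum_{t \geq 0} \hat{f_2}(t) \langle z^{kt} F_2, z^{ks} F_j \rangle.\]
The key step is a short support analysis: the monomial exponents appearing in $z^{kt} F_1$ lie in $\{kt+h : 0 \leq h \leq 4\} \cup \{k(t+1)+h : 0 \leq h \leq 3\}$, and those of $z^{kt} F_2$ lie in $\{kt+h : h \in \{0,1,2,3,5\}\}$. Because $k \geq 6$, the ``exceptional'' exponents $4$ and $5$ cannot coincide with any exponent from an adjacent $k$-block, so $\langle z^{kt} F_i, z^{ks} F_j \rangle$ vanishes whenever $|t - s| \geq 2$, and even for $|t-s| \leq 1$ several would-be pairings are forced to be zero by the same support argument.

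Carrying this reduction out for $j = 1$, the surviving contributions come from $t \in \{s-1, s, s+1\}$ in the $f_1$-sum and from $t \in \{s, s+1\}$ in the $f_2$-sum (the $t=s-1$ contribution of $F_2$ vanishes because $k(s-1)+5$ matches no exponent of $z^{ks} F_1$ when $k \geq 6$). Matching each surviving inner product to the quantities of Definition \ref{notation}, using Hermitian symmetry to recognize $\langle z^{k(s+1)} F_1, z^{ks} F_1 \rangle = \overline{A_{s+1,1}}$, $\langle z^{ks} F_2, z^{ks} F_1 \rangle = \overline{A_{s,2}}$, and $\langle z^{k(s+1)} F_2, z^{ks} F_1 \rangle = \overline{A_{s+1,5}}$, one recovers (a) on the nose. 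For $j = 2$ the surviving contributions come from $t \in \{s-1, s\}$ in the $f_1$-sum and only $t = s$ in the $f_2$-sum (since the support of $F_2$ fits within a single block of length less than $k$), giving (b).

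The argument is essentially routine bookkeeping, so no serious obstacle arises; the only delicate point is the support analysis itself, and this is precisely the reason for imposing $k \geq 6$ and for placing the lone exceptional degree $4$ in $F_1$ and $5$ in $F_2$. It is this separation that guarantees a clean three-term recurrence and is what the proof exploits to isolate (a) and (b) from a potentially much longer expression.
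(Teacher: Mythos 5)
Your argument is correct and it is essentially the paper's own proof, only spelled out: the paper's proof of Lemma \ref{Lemma3} is a two-sentence remark that (a) and (b) just unwind $f \perp z^{ks}F_1$ and $f \perp z^{ks}F_2$ via Lemma \ref{Lemma1}, together with the observation that all other pairings of Taylor coefficients vanish, which is precisely the support analysis you carried out explicitly. One small expository quibble: the vanishing for $|t-s|\geq 2$ has nothing to do with the exceptional degrees $4$ and $5$ -- it follows directly from the fact that the exponent ranges $[kt, k(t+1)+3]$ and $[ks, k(s+1)+3]$ are disjoint once $|t-s|\geq 2$ and $k\geq 4$; the exceptional degrees are only needed to rule out certain pairings at $|t-s|\in\{0,1\}$ (namely that $ks+4$ never meets a $b$-coefficient and $ks+5$ never meets an $a$-coefficient), which is the genuinely nontrivial part of the bookkeeping and which you also handle correctly.
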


\begin{proof}
By Lemma \ref{Lemma1}, $f \in M$, so the only condition that needs
checking is that $f$ is orthogonal to the functions $z^{ks} F_j$,
for $s \geq 1$ and $j=1,2$. (a) is equivalent to $f \perp z^{ks}F_1$
whereas (b) is equivalent to $f \perp z^{ks}F_2$, since any other
combinations of Taylor coefficients that may appear on the
expressions for the inner products are equal to zero.
\end{proof}

From now on, until the end of this Section we will concentrate on
giving conditions, in terms of the relevant quantities $A_{s,r}$
(which only depend on $F_1$ and $F_2$), under which the space $M$
will be as in the statement of the Theorem \ref{main}.

The previous Lemma may be interpreted as a recurrence scheme
relating the coefficients of functions in $M \ominus z^k M$. If we
make certain choices of parameters $a_i$ and $b_i$, we will be able
to use these recurrences to our end. The first step is apparently
innocent but later on, it will allow us to forget about the whole
tail of Fourier coefficients of degrees greater or equal to 2:

\begin{lem}\label{Lemma5}
Suppose that $F_1$ and $F_2$ are such that
\begin{equation}\label{eqn310}
A_{2,1}=A_{3,1}=A_{2,5}=A_{3,5}=0.
\end{equation}
Then, for any $f \in M\ominus z^k M$, the Fourier functions $f_1$
and $f_2$ must satisfy
\begin{equation}\label{eqn311}
\hat{f_1}(2)= \hat{f_2}(2)=0.
\end{equation}
\end{lem}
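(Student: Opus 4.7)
The plan is to extract a $2\times 2$ homogeneous linear system for the pair $(\hat{f_1}(2),\hat{f_2}(2))$ by plugging $s=2$ into the recurrences of Lemma \ref{Lemma3}, and then to show the coefficient matrix is non-singular using a Cauchy--Schwarz argument together with the linear independence of $F_1$ and $F_2$.

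More precisely, I would begin by applying Lemma \ref{Lemma3} at $s=2$. Relation (a) reads
\begin{equation*}
0 = \hat{f_1}(3)\overline{A_{3,1}} + \hat{f_2}(3)\overline{A_{3,5}} + \hat{f_1}(2)A_{2,3} + \hat{f_2}(2)\overline{A_{2,2}} + \hat{f_1}(1)A_{2,1},
\end{equation*}
and relation (b) reads
\begin{equation*}
0 = \hat{f_1}(2)A_{2,2} + \hat{f_2}(2)A_{2,4} + \hat{f_1}(1)A_{2,5}.
\end{equation*}
Under the hypothesis \eqref{eqn310} the terms containing $A_{2,1}$, $A_{3,1}$, $A_{2,5}$, $A_{3,5}$ all vanish, so the recurrences collapse to the purely local system
\begin{equation*}
\begin{pmatrix} A_{2,3} & \overline{A_{2,2}} \\ A_{2,2} & A_{2,4} \end{pmatrix}\begin{pmatrix} \hat{f_1}(2) \\ \hat{f_2}(2) \end{pmatrix} = \begin{pmatrix} 0 \\ 0 \end{pmatrix}.
\end{equation*}

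It then suffices to verify that the determinant $A_{2,3}A_{2,4} - |A_{2,2}|^2$ is nonzero. Recalling the definitions (A2)--(A4), this determinant equals $\|z^{2k}F_1\|_\alpha^2 \|z^{2k}F_2\|_\alpha^2 - |\langle z^{2k}F_1, z^{2k}F_2\rangle|^2$, which is nonnegative by Cauchy--Schwarz and vanishes only when $z^{2k}F_1$ and $z^{2k}F_2$ are linearly dependent in $D_\alpha$. Since multiplication by $z^{2k}$ is injective on $Hol(\D)$, this would force $F_1$ and $F_2$ themselves to be proportional. But by \eqref{defF1}--\eqref{defF2}, the coefficient of $z^4$ in $F_1$ is $a_4 \neq 0$ while the coefficient of $z^4$ in $F_2$ is $0$; symmetrically, the $z^5$ coefficient of $F_2$ is $b_5 \neq 0$ while that of $F_1$ is $0$. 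Hence $F_1$ and $F_2$ are linearly independent, the Cauchy--Schwarz inequality is strict, and the $2\times 2$ system has only the trivial solution, giving \eqref{eqn311}.

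The only non-mechanical part of the argument is the observation that the hypothesis cuts off exactly the four terms that couple $\hat{f}(2)$ to $\hat{f}(1)$ and $\hat{f}(3)$; once that is noted, the rest is linear algebra. I do not anticipate any real obstacle, but I would be careful to note that the linear independence of $F_1,F_2$ (needed to invoke strict Cauchy--Schwarz) rests solely on the hypothesis $a_4\neq 0 \neq b_5$ of the construction, which matches the standing hypothesis of Lemma \ref{Lemma1}.
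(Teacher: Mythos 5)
Your proof is correct and follows essentially the same route as the paper: set $s=2$ in Lemma \ref{Lemma3}, note that the hypothesis \eqref{eqn310} kills the coupling terms, obtain the homogeneous $2\times 2$ Gram system in $(\hat{f_1}(2),\hat{f_2}(2))$, and conclude by strict Cauchy--Schwarz since $a_4\neq 0\neq b_5$ prevents $F_1$ and $F_2$ from being proportional. The extra detail you supply (writing out the recurrences explicitly and spelling out the linear independence via the $z^4$ and $z^5$ coefficients) is a correct elaboration of exactly what the paper asserts.
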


\begin{proof}
Consider the case $s=2$ in Lemma \ref{Lemma3}, and suppose that
\eqref{eqn310} holds. The equations in Lemma \ref{Lemma3} will be
satisfied if and only if the following linear system is satisfied:
\[\begin{pmatrix}
\|z^{2k} F_1\|^2_{\alpha} & \langle z^{2k} F_2, z^{2k} F_1 \rangle \\
\langle z^{2k} F_1, z^{2k} F_2 \rangle & \|z^{2k} F_2\|^2_{\alpha}
\end{pmatrix} \cdot \begin{pmatrix} \hat{f_1}(2) \\
\hat{f_2}(2)\end{pmatrix} = \begin{pmatrix} 0 \\ 0\end{pmatrix}.\]
By the Cauchy-Schwarz inequality, the determinant of this system is
strictly positive ($F_1$ and $F_2$ can't be multiples of each other,
because of the presence of $a_4$ and $b_5$). Therefore, the only
solution is the trivial one \eqref{eqn311}.
\end{proof}

We are finally ready to give insight into the role of the recurrence
relations in describing $M \ominus z^kM$:
\begin{lem}\label{Lemma6}
Suppose $F_1$ and $F_2$ satisfy \eqref{eqn310} in Lemma
\ref{Lemma5}, and moreover suppose that
\begin{equation}\label{eqn312}
A_{1,1}=0.\end{equation} Then $M \ominus z^kM$ is spanned by $F_2$
and $F_3$ where
\[
F_3(z)= F_1(z) + \frac{z^k A_{1,5}}{|A_{1,2}|^2 - A_{1,3}A_{1,4}}
\left(A_{1,3}F_2(z) - \overline{A_{1,2}} F_1(z)\right).\]
\end{lem}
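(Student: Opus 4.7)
The plan is to split the proof into two independent pieces: (1) verify directly that $F_2$ and $F_3$ both lie in $M\ominus z^kM$; and (2) show that $\dim(M\ominus z^kM)\leq 2$ via a quotient argument. Together with the linear independence of $F_2$ and $F_3$, these will force $\{F_2,F_3\}$ to span $M\ominus z^kM$.

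For (1), the membership $F_2\in M\ominus z^kM$ is immediate from a disjoint-support observation: the Taylor support of $F_2$ lies in $\{0,1,2,3,5\}$, while that of $z^{ks}F_j$ lies in $\{ks,ks+1,\ldots\}$, and these are disjoint whenever $ks\geq 6$, which holds for every $s\geq 1$ since $k\geq 6$. For $F_3$, I would decompose $F_3 = F_1 + c_1 z^kF_1 + c_2 z^kF_2$ with $c=A_{1,5}/(|A_{1,2}|^2-A_{1,3}A_{1,4})$, $c_1=-c\overline{A_{1,2}}$, $c_2=cA_{1,3}$, and compute $\langle F_3,z^{ks}F_j\rangle$ in three regimes. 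For $s\geq 3$ the inner product vanishes by disjoint supports, since $\deg F_3\leq 2k+3<3k\leq ks$. For $s=2$, only the term $c_1\langle z^kF_1,z^{2k}F_j\rangle$ survives, contributing $c_1A_{2,1}$ or $c_1A_{2,5}$, both zero by \eqref{eqn310}. For $s=1$ and $j=2$ the expression telescopes to $A_{1,5}+c(A_{1,3}A_{1,4}-|A_{1,2}|^2)$, which is identically zero by the very definition of $c$; and for $s=1$ and $j=1$ the $c_1,c_2$ contributions cancel against each other, leaving just $\langle F_1,z^kF_1\rangle=A_{1,1}$, which vanishes precisely by the new hypothesis \eqref{eqn312}.

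For (2), Lemma~\ref{Lemma1} writes each $f\in M$ as $f_1(z^k)F_1+f_2(z^k)F_2$ with $f_i\in D_\alpha$; splitting $f_i(w)=f_i(0)+w\tilde f_i(w)$ with $\tilde f_i\in D_\alpha$ (the weight ratios $(n+1)^\alpha/(n+2)^\alpha$ are bounded, so shifting preserves the space) yields $f\equiv f_1(0)F_1+f_2(0)F_2\pmod{z^kM}$, and hence $M/z^kM$ is spanned by $[F_1]$ and $[F_2]$. Since $M_{z^k}$ is bounded below on $D_\alpha$---the ratios $(n+k+1)^\alpha/(n+1)^\alpha$ stay uniformly away from $0$---the image $z^kM$ is closed in $M$, so $M\ominus z^kM\cong M/z^kM$ has dimension at most~$2$. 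Linear independence of $[F_1]$ and $[F_2]$ modulo $z^kM$ follows because any element of $z^kM$ has zero Taylor coefficients below degree $k$: a relation $\lambda F_1+\mu F_2\in z^kM$ would force $\lambda a_4=0$ (reading the degree-$4$ coefficient) and $\mu b_5=0$ (degree $5$), giving $\lambda=\mu=0$ since $a_4,b_5\neq 0$. Because $F_3\equiv F_1\pmod{z^kM}$, the pair $\{[F_2],[F_3]\}$ inherits the same independence, forcing $\{F_2,F_3\}$ to span $M\ominus z^kM$.

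The only genuinely nontrivial cancellation is at $s=1$, where the explicit formula for $c$ makes $\langle F_3,z^kF_2\rangle$ vanish automatically, while $\langle F_3,z^kF_1\rangle$ collapses to $A_{1,1}$; the role of the new assumption \eqref{eqn312} is precisely to kill this last term. Everything else reduces to disjoint-support arithmetic or elementary quotient-space bookkeeping.
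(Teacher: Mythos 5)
Your argument is correct, and it takes a genuinely different route to the dimension count than the paper. The paper works entirely inside the recurrence framework of Lemma~\ref{Lemma3}: it first invokes Lemma~\ref{Lemma5} to kill the Fourier coefficients of order~$2$ for any $f \in M\ominus z^kM$ (this is where $A_{3,1}=A_{3,5}=0$ actually enter), then reads the $s=1$ equations as a $2\times 2$ linear system in $\hat{f_1}(1), \hat{f_2}(1)$ with $\hat{f_1}(0)$ in the inhomogeneous term and $\hat{f_2}(0)$ free, and finally argues that any element of $M\ominus z^kM$ with Fourier support in degrees $\geq 3$ must be simultaneously in $z^kM$ and orthogonal to it, hence zero. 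You replace all of this with (a) direct verification that $F_2, F_3 \in M\ominus z^kM$ via the decomposition $F_3 = F_1 + c_1 z^kF_1 + c_2 z^kF_2$ and case analysis on $s$, and (b) the abstract bound $\dim(M\ominus z^kM) = \dim(M/z^kM) \leq 2$, obtained from Lemma~\ref{Lemma1} by peeling off the constant terms of $f_1, f_2$ and using that $M_{z^k}$ is bounded below so $z^kM$ is closed. This is cleaner in two respects. First, the quotient argument is entirely independent of the hypotheses \eqref{eqn310} and \eqref{eqn312}: it gives $\dim(M\ominus z^kM)\leq 2$ for any $M$ of the form \eqref{eqn300} once $a_4, b_5 \neq 0$. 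Second, your computation reveals that membership of $F_3$ in $M\ominus z^kM$ only uses $A_{1,1} = A_{2,1} = A_{2,5} = 0$, not the full \eqref{eqn310}; the conditions $A_{3,1}=A_{3,5}=0$ are artifacts of the paper's passage through Lemma~\ref{Lemma5} rather than intrinsic to Lemma~\ref{Lemma6}. Both paths are sound; the paper's keeps everything expressed in the recurrence language used throughout Section~\ref{promain1}, while yours isolates the linear-algebraic skeleton and makes the role of each hypothesis more transparent.
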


Before proceeding with the proof, notice that the denominator of the
second term of the right-hand side is different from 0 by
Cauchy-Schwarz inequality, and so $F_3$ is well defined. Also,
notice that we could also say that $M \ominus z^k M$ is generated by
$F_2$ and $F_4$ where \[F_4(z)= F_1(z) \left( 1- \frac{z^k
A_{1,5}\overline{A_{1,2}}}{|A_{1,2}|^2 - A_{1,3}A_{1,4}}\right).\]

\begin{proof}
Let $f \in M \ominus z^k M$. Since the hypothesis of Lemma
\ref{Lemma5} is met, we can assume the Fourier coefficients of order
2 of the function $f$ satisfy \eqref{eqn311}.

Now, we turn to the case $s=1$ in Lemma \ref{Lemma5}, and we make
use of \eqref{eqn311}, eliminating the terms on $\hat{f_1}(2)$ or
$\hat{f_2}(2)$. We then obtain a linear system which can be written
as
\[\begin{pmatrix}
A_{1,3} & \overline{A_{1,2}} \\
A_{1,2} & A_{1,4}
\end{pmatrix} \cdot \begin{pmatrix} \hat{f_1}(1) \\
\hat{f_2}(1)\end{pmatrix} = \begin{pmatrix} 0 \\ -
\hat{f_1(0)}A_{1,5}\end{pmatrix}.\] By Cauchy-Schwartz inequality,
the system is invertible. This means that for each value of
$\hat{f_1}(0)$ there is a unique value of $\hat{f_1}(1)$ and
$\hat{f_2}(1)$ while $\hat{f_2}(0)$ is free. We can then see that
any element in $M \ominus z^k M$ has a set of coefficients of order
0, 1 and 2 that is a linear combination of those for $F_2$ and those
for $F_3$, since these ones arise as the solution when taking
$\hat{f_2}(0) \neq 0 = \hat{f_1}(0)$ (for $F_2$) and, respectively
for $F_3$, $\hat{f_2}(0)=0\neq \hat{f_1}(0)$. Moreover, $F_2$ and
$F_3$ satisfy all the cases $s \geq 3$ of Lemma \ref{Lemma3}
trivially, which implies that $F_2$ and $F_3$ are elements of
$M\ominus z^kM$. Therefore, any other element $G$ of $M\ominus z^k
M$ must be of the form $G = \lambda_0 F_2 + \lambda_1 F_3 + F_5$,
for some constants $\lambda_0, \lambda_1 \in \C$, where the Fourier
coefficients of $F_5$ are non-zero only for order $s \geq 3$.
Bearing in mind that $F_2$ and $F_3$ are in $M \ominus z^k M$, we
can see that if $G \in M \ominus z^k M$, then, also $F_5$ must
belong to $M \ominus z^kM$, but at the same time, $F_5$ must be in
$z^k M$. Hence, $F_5$ is orthogonal to itself, i.e., it is
identically 0. We have seen that indeed, $F_2$ and $F_3$ span all
the elements of $M \ominus z^k M$.
\end{proof}

It could very well be that $A_{1,5}A_{1,2}=0$ and then $F_3 = F_1$
(from the previous Lemma). In that case, $M$ would definitely have
the wandering property. A crucial step will be to show that under
certain assumptions (that obviously will have to imply $A_{1,5}
A_{1,2} \neq 0$) $F_1$ is not generated as a combination of $F_2$
and $F_3$. This may be regarded as the main point in the proof:

\begin{lem}\label{thm7}
Suppose $F_1$ and $F_2$ satisfy \eqref{eqn310} in Lemma
\ref{Lemma5}, and \eqref{eqn312} in Lemma \ref{Lemma6}.
Additionally, suppose that
\begin{equation}\label{eqn315}
A_{1,5}A_{1,2} \neq 0
\end{equation}
and
\begin{equation}\label{eqn316}
A_{1,3}A_{1,4}-|A_{1,2}|^2 < |A_{1,5}A_{1,2}|.
\end{equation}
Then $F_1 \notin \{M \ominus z^k M\}_{z^k}$. In particular, $M$ does
not have the wandering property.
\end{lem}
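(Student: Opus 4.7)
The plan is to exploit the Fourier-transform description from Lemma \ref{Lemma1} to identify $M$ with (a renormed copy of) $D_\alpha \oplus D_\alpha$, to compute $\{M \ominus z^k M\}_{z^k}$ explicitly in these coordinates, and to exhibit a continuous linear functional (point evaluation inside the disc) that separates $F_1$ from the candidate subspace.

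First I would rewrite $F_3$ from Lemma \ref{Lemma6} in the Fourier form
\[
F_3 = (1-cz^k)F_1 + dz^k F_2, \qquad c := \frac{A_{1,5}\overline{A_{1,2}}}{|A_{1,2}|^2 - A_{1,3}A_{1,4}},
\]
with $d$ the analogous quotient involving $A_{1,3}$. Because $a_4\neq 0 \neq b_5$, the functions $z^k F_1$ and $z^k F_2$ are linearly independent and the Cauchy-Schwarz inequality is strict, so the denominator of $c$ is nonzero; hypothesis \eqref{eqn315} forces $c\neq 0$ as well. Under the identification $f \leftrightarrow (f_1, f_2)$ one has $F_1 \leftrightarrow (1,0)$, $F_2 \leftrightarrow (0,1)$, and $F_3 \leftrightarrow (1-cw,\,dw)$.

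Next I would identify $\{M \ominus z^k M\}_{z^k}$ with $E_c \oplus D_\alpha$, where $E_c$ denotes the closed $w$-invariant subspace of $D_\alpha$ generated by $1-cw$. The inclusion $\subseteq$ is clear from the description of the generators. For $\supseteq$, given $g_1 \in E_c$ with approximating polynomials $q_n(1-cw) \to g_1$, and $g_2 \in D_\alpha$ with polynomial approximants $r_n \to g_2$, set $p_n := r_n - dw\, q_n$; then
\[
p_n(z^k)F_2 + q_n(z^k)F_3 \;\longleftrightarrow\; (q_n(1-cw),\, r_n) \;\longrightarrow\; (g_1, g_2),
\]
with convergence in $M$ following from the continuity of the map $(f_1, f_2) \mapsto f_1(z^k)F_1 + f_2(z^k)F_2$ and its inverse (a consequence of Lemma \ref{Lemma1} and the fact that $F_1, F_2$ are polynomial multipliers).

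Since $F_1 \leftrightarrow (1,0)$, the claim reduces to showing the constant function $1$ does not lie in $E_c$. Hypothesis \eqref{eqn316} combined with the strict Cauchy-Schwarz bound $A_{1,3}A_{1,4} > |A_{1,2}|^2$ gives $|c| > 1$, so $1/c \in \D$ and $1 - c(1/c) = 0$. A direct Cauchy-Schwarz estimate on the Taylor series shows that point evaluation at any point of $\D$ is a bounded functional on $D_\alpha$ for every real $\alpha$, so every element of $E_c$ vanishes at $1/c$; the constant $1$ does not. The step requiring the most care is the identification of $\{M \ominus z^k M\}_{z^k}$ with $E_c \oplus D_\alpha$, in particular keeping track of the topology coming from the norm equivalence in Lemma \ref{Lemma1}; the separating-functional conclusion is then essentially immediate.
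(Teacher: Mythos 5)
Your proof is correct and takes essentially the same route as the paper: both pass to the Fourier coordinates of Lemma \ref{Lemma1}, invoke Lemma \ref{Lemma6} to reduce to the assertion that the constant $1$ cannot lie in the closed shift-invariant subspace of $D_\alpha$ generated by the degree-one factor appearing in the $F_1$-component of $F_3$, and derive a contradiction from that factor having a zero inside $\D$. The only deviation is cosmetic -- you appeal directly to boundedness of point evaluation at $1/c\in\D$, where the paper cites the cyclicity theory of \cite{BS84,EFKMR} to the same end -- and your explicit $E_c\oplus D_\alpha$ identification includes a reverse inclusion that the separating argument does not actually require.
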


Before we proceed with the proof, some remarks are in order:
\begin{rem}\label{rem8}
\begin{itemize}
\item[(1)] Since the left-hand side of \eqref{eqn316} is strictly
positive (once more due to Cauchy-Schwartz inequality), the property
\eqref{eqn316} clearly implies \eqref{eqn315}. However, this
assumption made explicit justifies studying the quotient of both
quantities \[\frac{A_{1,3}A_{1,4}-|A_{1,2}|^2}{ |A_{1,5}A_{1,2}|}.\]
\item[(2)] It seems relevant to interpret condition \eqref{eqn316}
in terms of Cauchy-Schwartz inequality. This condition tells us then
that $F_1$ and $F_2$ are \emph{almost parallel}. However, the
remaining conditions \eqref{eqn310} and \eqref{eqn312} play against
this as an obstacle.
\item[(3)] Conditions \eqref{eqn310}, \eqref{eqn312} and
\eqref{eqn315} do not depend at all on the choice of $a_4$ and
$b_5$, although in order for the existence of the Fourier
expressions we needed them to be non-null. This, together with the
fact that \eqref{eqn316} is an open condition (a strict inequality
between 2 quantities that are continuous as functions of $a_4$ and
$b_5$, seen as variables), simplifies the problem: if we can find a
solution to \eqref{eqn310}, \eqref{eqn312}, \eqref{eqn315} and
\eqref{eqn316} with $a_4=b_5=0$, then a small enough perturbation
changing only the values of these 2 coefficients, will still meet
all the conditions.
\end{itemize}
\end{rem}

Now we are going to show the proof of Lemma \ref{thm7}.

\begin{proof}
Denote \[c = \frac{A_{1,3}A_{1,4}-|A_{1,2}|^2}{ |A_{1,5}A_{1,2}|}.\]
 $c$ is well defined because of \eqref{eqn315}. From Lemma \ref{Lemma6}, $F_2$ and $F_3$ span $M \ominus z^k M$, and thus we can
infer that if $F_1 \in \{M \ominus z^k M\}_{z^k}$, then $F_1$ can be
arbitrarily well approximated in the $D_\alpha$ norm by functions of
the form
\[\left(\sum_{i=0}^t \lambda_{i,t} (z^k + c) z^{ki} \right) F_1(z) +
f_2(z^k) F_2(z),\] where $f_2 \in D_\alpha$ and $\lambda_{i,s}$ are
constants for each $i$ and $s$. Since the terms of order $ks+4$ ($s
\geq 1$) of $F_1$ are all null, this means that $\left(\sum_{i=0}^t
z^{ki} \lambda_{i,t} \right) (z^k + c) F_1(z)$ must approximate
$F_1$ in $D_\alpha$-norm arbitrarily well. Now we will use rather
basic properties of (classical) shift-invariant subspaces of the
spaces $D_\alpha$. The preliminaries for these may be found in
\cite{BS84} and \cite{EFKMR}. Firstly, $(z^k+c)F_1(z)$ generates the
same $z-$invariant subspace of $D_\alpha$ as $F_1$. This implies
that $z^k +c$ is a cyclic function for the shift operator acting on
$D_\alpha$, which in turn, implies that $z^k +c$ must have no zeros
in $\D$. This is only possible if $|c| \geq 1$, which contradicts
the assumption \eqref{eqn316}, completing the proof.
\end{proof}

\begin{rem}
Although for $\alpha > 1$ the cyclicity of $z^k +c$ would imply the
stronger condition that $|c|>1$, when including $a_4$ and $b_5$ in
the computations we need to allow for additional space, so the
condition that we are using seems sharp from this point of view in
all spaces.\end{rem}

\begin{rem}\label{finalsufficient}
What remains, in order to complete the proof of Theorem \ref{main},
is to find a suitable choice of $a_0,...,a_3, a_k,...,a_{k+3},
b_0,...,b_3$ satisfying all the conditions \eqref{eqn310},
\eqref{eqn312}, \eqref{eqn315} and \eqref{eqn316}. \end{rem}

\section{A particular choice of parameters meeting the sufficient conditions}\label{promain2}

In the previous Section we have identified the key optimization
problem of determining whether
\begin{equation}\label{eqn400}
\inf \frac{A_{1,3}A_{1,4}-|A_{1,2}|^2}{ |A_{1,5}A_{1,2}|} <1,
\end{equation}
where the infimum runs through all nonzero vectors
\[(a_0,...,a_3,a_k,...,a_{k+3},b_0,...,b_3) \in \C^{12}\] for which
the equations \eqref{eqn310}, \eqref{eqn312}, \eqref{eqn315} hold.
If this is the case, then the space $M$ will fail to have the
wandering property ($M$ is described in \eqref{eqn300}, where $F_1$
and $F_2$ are as in \eqref{defF1} and \eqref{defF2}, the remaining
notation is in Definition \ref{notation}).

Since $A_{1,2}, A_{1,3}, A_{1,4}, A_{1,5}$, are all 2-homogeneous,
the objective function in \eqref{eqn400} is 0-homogeneous. Hence, we
could in principle choose, for example,
\begin{equation}\label{eqn401} A_{1,5}A_{1,2}=1 \end{equation} and minimize the
numerator. Even though the road we will follow will be different, we
think that this expression in terms of an optimization problem is of
interest in itself. In this Section the main objective will
therefore be to show the following result:

\begin{thm}\label{CSIthm}
For $\alpha = -16$ and $k=6$,
\[\inf A_{1,3}A_{1,4}-|A_{1,2}|^2 < 1\]
where the infimum runs through all the non-null possible vectors of
coefficients \[(a_0,...,a_3,a_k,...,a_{k+3},b_0,...,b_3) \in
\C^{12}\] such that \eqref{eqn310}, \eqref{eqn312}, \eqref{eqn315}
and \eqref{eqn401} hold.
\end{thm}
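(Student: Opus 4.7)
The plan is to convert the infimum into a finite-dimensional feasibility problem and to exhibit an explicit witness. By Remark \ref{rem8}(3), none of the constraints \eqref{eqn310}, \eqref{eqn312}, \eqref{eqn401} nor the objective depends on $a_4$ or $b_5$, so I first set $a_4=b_5=0$, reducing the unknowns to twelve complex coefficients subject to the five homogeneous linear conditions $A_{1,1}=A_{2,1}=A_{3,1}=A_{2,5}=A_{3,5}=0$ together with the complex normalization $A_{1,5}A_{1,2}=1$. Once feasibility with objective value $<1$ is established in this simplified setting, a small reintroduction of nonzero $a_4,b_5$ preserves the strict inequality by openness, supplying the hypotheses of Theorem \ref{main}.

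To solve the linear part, set $c_h=\overline{a}_h a_{6+h}$; the three conditions $A_{s,1}=0$ for $s=1,2,3$ then read
\[
\sum_{h=0}^{3}(6s+h+1)^{-16}\,c_h \;=\; 0.
\]
A direct $3\times 3$ minor computation shows that the $3\times 4$ real matrix $\bigl((6s+h+1)^{-16}\bigr)_{s,h}$ has linearly independent rows, so its kernel is a real line generated by an explicit vector $\vec c^{\ast}=(c_0^{\ast},\ldots,c_3^{\ast})$ whose entries one checks to be nonvanishing. Thus every feasible $\vec a$ arises from a free choice of $(a_0,\ldots,a_3)\in(\C\setminus\{0\})^4$ and a scalar $\lambda\in\C$, via $a_{6+h}=\lambda\, c_h^{\ast}/\overline{a}_h$. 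An entirely analogous argument reduces $A_{2,5}=A_{3,5}=0$ to a $2$-dimensional complex affine locus of admissible $\vec b$.

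The heart of the proof is then to exhibit a feasible point for which $A_{1,3}A_{1,4}-|A_{1,2}|^2<1$. I would parametrize $\vec b=\mu\vec a+\vec\delta$ with $\vec\delta$ ranging in the $2$-dim admissible subspace. In the limit $\vec\delta\to 0$, the vector $\vec v=(b_i\sqrt{\omega_{6+i}})_i$ becomes proportional to $\vec u=(a_i\sqrt{\omega_{6+i}})_i$ and the Cauchy--Schwarz defect $\|\vec u\|^2\|\vec v\|^2-|\langle\vec u,\vec v\rangle|^2$ shrinks quadratically; but simultaneously $A_{1,5}\to\overline{\mu}A_{1,1}=0$, so the normalization forces $\vec\delta$ away from zero. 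The resulting objective takes the form $\|\vec u\|^2\|\vec W\|^2$ (with $\vec W=(a_{6+i}\sqrt{\omega_{12+i}})_i$) plus an explicit correction quadratic in $\vec\delta$; after optimizing the moduli $|a_h|$ by Cauchy--Schwarz and balancing the overall scale of $\vec a$ against that of $(a_{6+h})$, only a handful of real parameters remain. The main obstacle is the final finite verification that, for the specific weights $(j+1)^{-16}$ and the computed null direction $\vec c^{\ast}$, this minimum is strictly below $1$; this is precisely where the choice $\alpha=-16$, $k=6$ enters, since for other parameter values the optimized quantity exceeds $1$ and the construction fails.
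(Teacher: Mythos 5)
Your overall strategy (parametrize the solution set of the linear constraints, then exhibit a feasible point with small Cauchy--Schwarz defect) matches the paper's, but the proposal stops short of the actual proof. The paper's argument proceeds by writing the constraints \eqref{eqn310}, \eqref{eqn312} in the matrix form \eqref{eqn430} with the $3\times4$ matrix $N$ of \eqref{eqn431}, inverting the augmented matrix $N_0$ to get the quantities $E_i, G_i$ of \eqref{eqn435}, and then showing that the objective reduces to minimizing (after eliminating $Z_1$, $Z_3$, $A_{1,5}$ by explicit optimizations) the function $B_2 = 4C_2C_4(1,d_1,d_2,d_3)$ over three positive reals $d_1,d_2,d_3$. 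The crux is then a hard numerical estimate: one computes $\det N_1$ and the six minor ratios $E_i, G_i$ for the specific weights $\omega_t=(t+1)^{-16}$ using interval arithmetic, plugs these into the bounds \eqref{eqn496}--\eqref{eqn497}, and evaluates at the witness $(d_1,d_2,d_3)=(1,4,6)$ to get $B_2\le 0.02795 < 1$. Your proposal correctly identifies that ``a direct minor computation'' and a ``final finite verification'' are required, but then explicitly declines to perform them, calling the verification ``the main obstacle.'' Since that verification \emph{is} the content of the theorem — without it there is no proof that the infimum is below $1$ — this is a genuine gap, not a rounding-off of routine details.

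Two further points. First, your opening reduction (``set $a_4=b_5=0$'') is vacuous here: as stated, Theorem~\ref{CSIthm} optimizes over the $12$ coefficients $(a_0,\dots,a_3,a_6,\dots,a_9,b_0,\dots,b_3)$, and $a_4,b_5$ are not among them; the role of $a_4,b_5$ (and of Remark~\ref{rem8}(3)) is in passing from Theorem~\ref{CSIthm} back to Theorem~\ref{main}, not inside the present statement. Second, your closing remark that ``for other parameter values the optimized quantity exceeds $1$ and the construction fails'' is incorrect: the paper's Section~\ref{numerics} and Theorem~\ref{9k28} show the same construction succeeds for a wide range of $(\alpha,k)$, e.g.\ all $\alpha \le -(5k + 700/(k-9)^2)$ for $k\ge 10$ and, numerically, all $\alpha\le -4.999$ at $k=6$. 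The choice $\alpha=-16$, $k=6$ is one convenient instance, not a degenerate sweet spot.
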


Denote by $B_0$ the function $\C^{12} \rightarrow \R$ given by
\begin{equation}\label{eqn440}B_0
=\frac{A_{1,3}A_{1,4}-|A_{1,2}|^2}{|A_{1,2}A_{1,5}|}.\end{equation}
We will show the validity of \eqref{eqn400}, which is equivalent to
Theorem \ref{CSIthm}. We start with several further reductions of
the problem that are still applicable in big generality.

We will later change twice the objective function that we will
optimize, but bounds for $B_0$ will follow from bounds for those
other functions $B_1$ and $B_2$. We will find an explicit bound for
$B_2$, while $B_1$ will reappear on Section \ref{numerics} on
numerical results.

\subsection{General method}

The conditions \eqref{eqn310} and \eqref{eqn312}, which are required
for vectors $(a_0,...,a_3,a_k,...,a_{k+3},b_0,...,b_3)$ to be
eligible to the minimization problem in \eqref{eqn400}, may be
expressed altogether in a pair of matrix systems as follows:
\begin{equation}\label{eqn430}
 N \begin{pmatrix}
a_0 \overline{a_k} \\
a_1 \overline{a_{k+1}} \\
a_2 \overline{a_{k+2}} \\
a_3 \overline{a_{k+3}}
\end{pmatrix} = \begin{pmatrix}
0 \\
0 \\
0\end{pmatrix}, \quad  N \begin{pmatrix}
b_0 \overline{a_k} \\
b_1 \overline{a_{k+1}} \\
b_2 \overline{a_{k+2}} \\
b_3 \overline{a_{k+3}}
\end{pmatrix} = \begin{pmatrix}
\overline{A_{1,5}} \\
0 \\
0\end{pmatrix},\end{equation} where $N$ is the $3 \times 4$ matrix
given by
\begin{equation}\label{eqn431}
N = \begin{pmatrix}
\omega_k & \omega_{k+1} & \omega_{k+2} & \omega_{k+3} \\
\omega_{2k} & \omega_{2k+1} & \omega_{2k+2} & \omega_{2k+3} \\
\omega_{3k} & \omega_{3k+1} & \omega_{3k+2} & \omega_{3k+3}
\end{pmatrix}.\end{equation}

\begin{rem}
Whenever the ambient space is either the Hardy space or the
Dirichlet space, the above system is incompatible with the equation
\eqref{eqn315}, since the last row is a linear combination of the
previous ones. This problem only appears with the \emph{usual norm}
in those spaces, given by $\omega_k = 1$ or $\omega_k=k+1$,
respectively, and linear combinations of these.
\end{rem}

We also introduce the following notation which will simplify the
linear expressions:
\begin{equation}\label{eqn432}
N_0 = \begin{pmatrix}
1 & 0 & 0 & 0 \\
\omega_k & \omega_{k+1} & \omega_{k+2} & \omega_{k+3} \\
\omega_{2k} & \omega_{2k+1} & \omega_{2k+2} & \omega_{2k+3} \\
\omega_{3k} & \omega_{3k+1} & \omega_{3k+2} & \omega_{3k+3}
\end{pmatrix},\end{equation}
and
\begin{equation}\label{eqn433}
N_1 = \begin{pmatrix}
\omega_{k+1} & \omega_{k+2} & \omega_{k+3} \\
\omega_{2k+1} & \omega_{2k+2} & \omega_{2k+3} \\
\omega_{3k+1} & \omega_{3k+2} & \omega_{3k+3}
\end{pmatrix}.\end{equation}
By introducing two variables $Z_1$ and $Z_2$, that will play a mute
role, we may convert the system \eqref{eqn430} into a square linear
system with the matrix $N_0$ instead of $N$:
\begin{equation}\label{eqn434}
 N_0 \begin{pmatrix}
a_0 \overline{a_k} \\
a_1 \overline{a_{k+1}} \\
a_2 \overline{a_{k+2}} \\
a_3 \overline{a_{k+3}}
\end{pmatrix} = \begin{pmatrix}
\overline{Z_1} \\
0 \\
0 \\
0\end{pmatrix}, \quad  N_0 \begin{pmatrix}
b_0 \overline{a_k} \\
b_1 \overline{a_{k+1}} \\
b_2 \overline{a_{k+2}} \\
b_3 \overline{a_{k+3}}
\end{pmatrix} = \begin{pmatrix}
\overline{Z_2} \\
\overline{A_{1,5}} \\
0 \\
0\end{pmatrix}.\end{equation} Notice $N_0$ is invertible whenever
$N_1$ is. This should be checked in each case: for us, the values
corresponding to $\alpha=-16$ and $k=6$ give a determinant of $N_1$
that is not 0 and so $N_0$ is invertible (for spaces other than
Hardy or Dirichlet, invertibility will typically hold for all values
of $k$ except perhaps a finite number of cases). If $Z_1$ and $Z_2$
are non-null, then so are $b_0,b_1,b_2$, and $b_3$. Hence, we can
express the equations in terms of the elements of $N_0^{-1}$. Denote
the elements of the first two rows of $N_0^{-1}$ by
\begin{equation}\label{eqn435}
\begin{pmatrix}
1 & 0 \\
E_1 & G_1 \\
E_2 & G_2 \\
E_3 & G_3
\end{pmatrix},\end{equation}
and denote \[ Z_3:= x+iy := \frac{Z_2}{A_{1,5}},\] which is
well-defined if we assume \eqref{eqn315} ($Z_3 \in \C, x,y \in \R$).
Since $\omega=\{\omega_k\}_{k \in \N} \subset \R$, then all of $E_1,
E_2, E_3, G_1, G_2$ and $G_3$ are also real numbers (and for each
fixed space and $k$ they are a specific and computable set of
numbers). Suppose that $E_i \neq 0$ for $i=1,2,3$. Then the system
\eqref{eqn434} is equivalent to the following one, which gives the
values of all the parameters $b_0,...,b_3$ and $a_k,...,a_{k+3}$ in
terms of those of $a_0,...,a_3$:
\[\begin{pmatrix}
a_k \\
a_{k+1} \\
a_{k+2} \\
a_{k+3}
\end{pmatrix} = \begin{pmatrix}
\frac{Z_1}{\overline{a_0}} \\
\frac{E_1 Z_1}{\overline{a_1}} \\
\frac{E_2 Z_1}{\overline{a_2}} \\
\frac{E_3 Z_1}{\overline{a_3}}
\end{pmatrix}, \quad  \begin{pmatrix}
b_0 \\
b_1 \\
b_2 \\
b_3
\end{pmatrix} = \begin{pmatrix}
a_0 \frac{\overline{A_{1,5}Z_3}}{\overline{Z_1}} \\
a_1 \frac{\overline{A_{1,5}}}{\overline{Z_1}} (\overline{Z_3} + \frac{G_1}{E_1}) \\
a_2 \frac{\overline{A_{1,5}}}{\overline{Z_1}} (\overline{Z_3} + \frac{G_2}{E_2}) \\
a_3 \frac{\overline{A_{1,5}}}{\overline{Z_1}} (\overline{Z_3} +
\frac{G_3}{E_3})
\end{pmatrix}.\] In this case, any choice of $Z_3 \in \C$ and $a_0,...,a_3, Z_1,
A_{1,5} \in \C \backslash \{0\}$ will yield the value of the
remaining variables so that \eqref{eqn310} and \eqref{eqn312} are
met. We can express the \emph{objective function} $B_0$ (as in
\eqref{eqn440}) in terms of these 7 variables only, and then find a
good choice of those 7 variables to provide a low value of $B_0$.

To do so, we translate each of the significant quantities for $B_0$,
that is $A_{1,3}, A_{1,4}$ and $A_{1,2}$, in terms of our variables.
In this way, $A_{1,3}$ is given by:
\[
A_{1,3}= |a_0|^2 \omega_k  + |a_1|^2 \omega_{k+1}   + |a_2|^2
\omega_{k+2} + |a_3|^2 \omega_{k+3} + \]\[ + |Z_1|^2 \left(
|a_0|^{-2} \omega_{2k} + |a_1|^{-2} E_1^2 \omega_{2k+1}  +
|a_2|^{-2} E_2^2 \omega_{2k+2} + |a_3|^{-2} E_3^2
\omega_{2k+3}\right).\] We introduce the following notations:
\[d_i = |a_i|^2, \quad i=0,1,2,3.\]
\[H_0 := 1, \quad H_i := E_i^2, \quad i=1,2,3.\]
\[C_1 = \sum_{i=0}^3 d_i \omega_{k+i},\quad C_2 = \sum_{i=0}^3 \frac{H_i \omega_{2k+i}}{d_i}.\]

This notation will be convenient because $C_1$ and $C_2$ only
depend, for a fixed space and $k$, on the choice of $d_i$ (which can
be taken to be any positive real number). By making this choice, the
expression for $A_{1,3}$ becomes much simpler: \[A_{1,3}= C_1
+|Z_1|^2 C_2.\] On the other hand, if we make use of the same
notation for $A_{1,4}$, and defining
\[D_i = -\frac{G_i}{E_i}, \quad i=1,2,3,\]
we may see that
\[A_{1,4}= \frac{|A_{1,5}|^2}{|Z_1|^2} \left( d_0 |Z_3|^2 \omega_k + \sum_{i=1}^3 d_i |Z_3-D_i|^2
\omega_{k+i}\right).\] To follow with previous notation, we give
names of the form $C_t$ to quantities that depend only on
$d_0,...,d_3$ and on the parameters arising from the choice of space
and the value of $k$:
\[C_3 = 2 \sum_{i=1}^3 D_i d_i \omega_{k+i}, \quad _4 = \sum_{i=1}^3 D_i^2 d_i \omega_{k+i}.\] These quantities can be
used to simplify the formula for $A_{1,4}$:
\[A_{1,4}=\frac{|A_{1,5}|^2}{|Z_1|^2} \left( C_1 |Z_3|^2 - C_3 x +
C_4 \right).\] Finally, $|A_{1,2}|^2$ is given by \[|A_{1,2}|^2=
\frac{|A_{1,5}|^2}{|Z_1|^2} \left| C_1 Z_3 - \frac{C_3}{2}
\right|^2.\] Observe that the value of $|A_{1,5}A_{1,2}|$ can also
be obtained now as a function of $A_{1,5}$ and the other free values
$d_0,...,d_3$, $Z_3$ and $Z_1$. Now we are ready to give the desired
form of $B_0$: \[B_0 = \frac{ |Z_1|^2 C_2 (C_1 |Z_3|^2 - C_3 x +
C_4) + C_1 C_4 -\frac{C_3^2}{4}}{|Z_1|\left| C_1 Z_3 - \frac{C_3}{2}
\right| } .\] The first conclusion from here is that the arguments
of $a_i$ and $Z_1$ do not play any role so we can work directly with
the positive real variables $d_0,d_1,d_2,d_3$ and assume $Z_1 \in
\R^+$. Since $H_i$ is always positive, so are $C_1, C_2$ and $C_4$.

Moreover, $B_0$ is positive: This is clear from its definition on
equation \eqref{eqn440}, if we bear in mind Definition
\ref{notation} and Cauchy-Schwarz inequality. As a function of $Z_1$
(considering other variables as constants), $B_0$ takes the form:
\[B_0= \frac{e_0}{B_1}+e_1B_1,\] where $e_0$ and $e_1$ do
not depend on $Z_1$. Now, notice that this implies that both $e_0$
and $e_1$ are non-negative independently of the choices for
$d_0,...,d_3, Z_3, A_{1,5}$. Otherwise, there would be choices of
$Z_1$ for which $B_0$ is negative, which is a contradiction. It can
be shown that in fact, both $e_0$ and $e_1$ are strictly positive
but in fact, if one of them or both is null, then we can take a
value of $Z_1$ such that $B_0$ is arbitrarily close to 0, and the
Theorem \ref{CSIthm} would hold. Therefore, $B_0=B_0(Z_1)$ is a
differentiable function of $Z_1 \in (0, +\infty)$ with
\[\lim_{Z_1 \rightarrow 0} B_0(Z_1) = \lim_{Z_1 \rightarrow +\infty}
B_0(Z_1) = +\infty,\] and so it must have a global minimum on
$\R^+$. This must be at the only point at which $0 = \frac{\partial
B_0}{\partial Z_1}$ which is at \[Z_1^*= \sqrt{\frac{e_0}{e_1}}.\]
There, we obtain the corresponding value for $B_0$: \[B_0(Z_1^*)= 2
\sqrt{e_0e_1}.\] We have reduced the problem of minimizing $B_0$ on
2 variables since we know which value to choose for $Z_1$ and
$A_{1,5}$ does not play any role anymore on the result (provided it
is not 0). Now it is the moment to study $e_0$ and $e_1$ and find a
good choice for $Z_3$ and for $d_0,...,d_3$. We introduce one more
bit of notation:
\[C_5= C_1C_4-\frac{C_3^2}{4},\]
so that $e_0$ is given by \[e_0= \frac{C_5}{|C_1 Z_3 -
\frac{C_3}{2}|},\] whereas $e_1$, by \[e_1 = \frac{C_2 \left( C_1
|Z_3|^2 - C_3 x + C_4 \right)}{|C_1 Z_3 - \frac{C_3}{2}|}.\]

Our objective is to show that $B_0$ can be made smaller than 1,
which will hold if and only if $B_0^2 <1$ for some values of $Z_3,
d_0,...,d_3$. For this reason, we intend to show that
\[4e_0e_1 < 1.\]
The left-hand side above can now be described only in terms of $C_j$
($j=1,...,5$) and $Z_3$: \[4e_0e_1 = \frac{4C_2C_5}{C_1} \left( 1 +
\frac{C_5}{|C_1 Z_3- \frac{C_3}{2}|^2}\right).\]

$C_5$ is positive as may be seen from the expressions for $C_1$,
$C_3$ and $C_4$. Thus, the existence of a set of values $Z_3,
d_0,...,d_3$ such that $4e_0e_1 < 1$ is equivalent with the
existence of (perhaps different values) $d_0,...,d_3$ so that
$4C_2C_5/C_1 < 1$. To see this, observe that this last condition is
necessary because $C_5$ is positive, and so $4e_0e_1 >
\frac{4C_2C_5}{C_1}$. Also, suppose that for some choice of
$d_0,...,d_3$ and for some $\varepsilon
>0$, we have
\[\frac{4C_2C_5}{C_1} < 1- \varepsilon.\]
Since $C_1, C_2, C_3, C_5$ do not depend on $Z_3$,  we can choose
$Z_3$ large enough in modulus so that \[\frac{C_5}{|C_1 Z_3 -
\frac{C_3^2}{4}|^2} < \varepsilon.\] This yields that
\[4e_0e_1 < 1 - \varepsilon ^2 < 1.\]

We can define the \emph{second objective function}
\begin{equation}\label{eqn601}B_1:=\frac{4C_2C_5}{C_1}\end{equation} (and it is a function of
$d_0,...,d_3$ only). We have reduced our original problem to the
much simpler one of proving
\begin{equation}\label{eqn470}
\inf_{d_0,d_1,d_2,d_3 \in \R^+} B_1 <1.
\end{equation}

Let us remark that $C_1$, $C_2$ and $C_5$ are respectively $1-$,
$(-1)-$ and $2-$ homogeneous, and so $B_1$ is 0-homogeneous.
Therefore, from now on we fix $d_0=1$ and we do not loose anything
(even if the best choice is for $d_0=0$, the condition we want to
show is open and  depends continuously on the parameters).

From the definition of $C_5$ we can see that $B_1 < 4C_2 C_4$. In
the next subsection, we will find values for which
\begin{equation}\label{eqn495}
B_2 (d_1,d_2,d_3) := 4C_2C_4(1,d_1,d_2,d_3)\end{equation} is bounded
above by 1, therefore showing \eqref{eqn470}. Hence we give a name
to $B_2$ as well: we call it the \emph{third objective function}.
Our final objective becomes the following:
\[\inf_{d_1,d_2,d_3 \in \R^+} B_2 <1.\]
As mentioned earlier, we will use again $B_1$ in the numerical
computations section, since it is easier to obtain a value below 1
for a particular set of parameters for $B_1$, but $B_2$ is easier to
deal with in abstract because of its simpler expression. We are left
with a 3 variable optimization problem concerning $d_1$, $d_2$ and
$d_3$ in $\R^+$. To proceed from here, one needs a good
understanding of the quantities $E_1, E_2, E_3, G_1, G_2, G_3$
defining $C_1, C_2$ and $C_5$. Recall the definitions of $N_0$ and
$N_1$ in \eqref{eqn432} and \eqref{eqn433}. Through \eqref{eqn435},
we defined $E_1, E_2, E_3$ as the
solution to \[N_0 \begin{pmatrix} 1 \\ E_1 \\
E_2
\\ E_3 \end{pmatrix} = \begin{pmatrix} 1 \\ 0 \\ 0
\\ 0 \end{pmatrix}.\]
The value 1 can be integrated into the independent term, giving the
equivalent system \begin{equation}\label{eqn472} \begin{pmatrix} E_1 \\
E_2 \\ E_3\end{pmatrix} = - N_1^{-1} \begin{pmatrix} \omega_k
\\ \omega_{2k} \\ \omega_{3k} \end{pmatrix}.\end{equation}

In an analogous way, we can obtain $G_1, G_2, G_3$ as the solution
to
\begin{equation}\label{eqn473} \begin{pmatrix}
G_1 \\ G_2 \\ G_3\end{pmatrix} = N_1^{-1} \begin{pmatrix} 1
\\ 0 \\ 0 \end{pmatrix}.\end{equation}

Cramer's rule tells us how to obtain the values of $E_1, E_2, E_3,
G_1, G_2$ and $G_3$ provided that
\[\det N_1 \neq 0.\]

Equations \eqref{eqn473} and \eqref{eqn472} allow to obtain the
values of
\[ G_1 = \frac{\begin{vmatrix}\omega_{2k+2} & \omega_{2k+3} \\
\omega_{3k+2} & \omega_{3k+3}
\end{vmatrix}}{\det N_1}; \quad E_1 = -\frac{\begin{vmatrix} \omega_k  & \omega_{k+2} & \omega_{k+3} \\ \omega_{2k} & \omega_{2k+2} & \omega_{2k+3} \\
\omega_{3k} & \omega_{3k+2} & \omega_{3k+3}
\end{vmatrix}}{\det N_1}; \]
\[ G_2 = \frac{\begin{vmatrix}\omega_{2k+3} & \omega_{2k+1} \\
\omega_{3k+3} & \omega_{3k+1}
\end{vmatrix}}{\det N_1}; \quad E_2 = -\frac{\begin{vmatrix} \omega_{k+1}  & \omega_{k} & \omega_{k+3} \\ \omega_{2k+1} & \omega_{2k} & \omega_{2k+3} \\
\omega_{3k+1} & \omega_{3k} & \omega_{3k+3}
\end{vmatrix}}{\det N_1};  \]
\[ G_3 = \frac{\begin{vmatrix}\omega_{2k+1} & \omega_{2k+2} \\
\omega_{3k+1} & \omega_{3k+2}
\end{vmatrix}}{\det N_1}; \quad E_3 = -\frac{\begin{vmatrix} \omega_{k+1}  & \omega_{k+2} & \omega_{k} \\ \omega_{2k+1} & \omega_{2k+2} & \omega_{2k} \\
\omega_{3k+1} & \omega_{3k+2} & \omega_{3k}
\end{vmatrix}}{\det N_1}.
\]

We need to compute these 7 determinants to go further and for this
we need to particularize on the values of $k$ and $\alpha$ of our
choice. Notice, however, that the 12 numbers determining the matrix
$N$ in \eqref{eqn431} also determine the answer to the optimization
problems presented.

The numerical observations of Section \ref{numerics} will suggest
that for each $k \geq 6$ there exists a small enough $\alpha_0$ such
that for all $\alpha \leq \alpha_0$, some $z^k$-invariant subspace
$M \subset D_\alpha$ fails to have the wandering property. The
numerical construction will be a minor modification of the one
presented up to this point. We prove that this is indeed the case
for $k=6$ and $\alpha= -16$ in the next subsection.

\subsection{Values for $k=6$ and $\alpha = -16$}

At this point we choose $k$ equal to 6 and $\omega=\{\omega_t\}_{t
\in \N}$ to be defined by \[\omega_t= (t+1)^{-16}.\] Since the scale
of all the numbers appearing in the matrix $N$ is dominated by
$7^{-16}$, it is numerically useful to express the elements of $N$
as $7^{-16}$ times other numbers. Going forward, for each figure we
write the decimal numbers that are the correct rounding number to
the nearest decimal. When the precision to which a number is
determined is below this level, this is expressed. In this way, we
obtain the values of $\omega_t$ indicated in Table \ref{table3},
which are given also compared to $\omega_k = 7^{-16} = 3.0090635
\cdot 10^{-14}$.
\begin{table}\label{table3}
\caption{Values of $\omega_t$ in the matrix $N$} \centering
\begin{tabular}{c c c}
\hline \hline $t$ & $\omega_t$ & $\omega_t \cdot 7^{16}$ \\
[0.5ex] \hline
 $k   $  &  $7^{-16} $  &   $1                          $ \\
 $k+1 $  &  $8^{-16} $  &   $1.1806708702 \cdot 10^{-1} $ \\
 $k+2 $  &  $9^{-16} $  &   $1.793446761 \cdot 10^{-2}  $ \\
 $k+3 $  &  $10^{-16}$  &   $3.32329305 \cdot 10^{-3}   $ \\
 $2k  $  &  $13^{-16}$  &   $4.99430433671 \cdot 10^{-5}$ \\
 $2k+1$  &  $14^{-16}$  &   $1.52587890625 \cdot 10^{-5}$ \\
 $2k+2$  &  $15^{-16}$  &   $5.05951042777 \cdot 10^{-6}$ \\
 $2k+3$  &  $16^{-16}$  &   $1.80156077608 \cdot 10^{-6}$ \\
 $3k  $  &  $19^{-16}$  &   $1.15215530802 \cdot 10^{-7}$ \\
 $3k+1$  &  $20^{-16}$  &   $5.0709427749 \cdot 10^{-8} $ \\
 $3k+2$  &  $21^{-16}$  &   $2.32305731254 \cdot 10^{-8}$ \\
 $3k+3$  &  $22^{-16}$  &   $1.10358489374 \cdot 10^{-8}$ \\
\hline
\end{tabular}
\end{table}

For each of the determinants to be computed precisely, it is
convenient to use the multiplicative properties of determinants to
extract the factors of the form $7^{-16}$, as well as powers of 10
in order to make the numbers in the determinants, mesoscopic. This
will yield

\[\det N_1 = 7 ^{-48} \cdot 10^{-16} \cdot
\begin{vmatrix}11.8... &  1.793... & 0.3323 \\ 15.25... & 5.0595... & 1.8015... \\ 5.07... & 2.323...
 & 1.1035...\end{vmatrix}.\] From here on, all the bounds are obtained from classical interval arithmetic. The above implies
\[ \det N_1 = (1.6207616 \pm 5\cdot 10^{-8}) \cdot 10^{-56}.\]

With this same philosophy we may obtain that \begin{eqnarray*}G_1 =
(7.8126227 \pm 6 \cdot 10^{-7}) \cdot 10^{14}, \quad E_1= -16.37478
\pm 2 \cdot
10^{-6},\\
G_2 = (-4.3037417 \pm 3 \cdot 10^{-7}) \cdot 10^{15}, \quad E_2= 65.63437 \pm 7 \cdot 10^{-6},\\
G_3 = (5.4695405 \pm 4 \cdot 10^{-7}) \cdot 10^{15}, \quad E_3=
-73.35945 \pm 2 \cdot 10^{-5}.\end{eqnarray*}

These results give upper bounds for the values appearing in the
definitions of $C_2$ and $C_4$: $H_i$ and $D_i^2$. In particular, we
may see that \begin{eqnarray*}H_1 = E_1^2 \leq 268.13349, \quad
D_1^2 = \frac{G_1^2}{E_1^2} \leq 2.276371
\cdot 10^{27},  \\
H_2 = E_2^2 \leq 4307.8715, \quad D_2^2 = \frac{G_2^2}{E_2^2} \leq
4.29962 \cdot 10^{27}, \\
H_3 = E_3^2 \leq 5381.61, \quad D_3^2 = \frac{G_3^2}{E_3^2} \leq
5.55892 \cdot 10^{27}.\end{eqnarray*}

Now we have all elements of the third objective function defined on
\eqref{eqn495}. We can obtain the bounds \[C_2 \leq 13^{-16} +
\frac{14^{-16} \cdot 268.13349}{d_1} + \frac{15^{-16} \cdot
4307.8715}{d_2} + \frac{16^{-16} \cdot 5381.61}{d_3},\] which can be
translated into
\begin{equation}\label{eqn496}
C_2 \leq 3.01 \cdot 10^{-20} \cdot \left(49.944 +
\frac{4091.393}{d_1} + \frac{21795.721}{d_2} +
\frac{9695.298}{d_3}\right).
\end{equation}
For $C_4$, the bound we obtain is
\begin{equation}\label{eqn497}
C_4 \leq 3.01 \cdot 10^{11} \cdot \left(26.877 d_1 + 7.712 d_2 +
1.848 d_3 \right).
\end{equation}

Putting together \eqref{eqn496} and \eqref{eqn497} provides
estimates for the third objective function in \eqref{eqn495}. At
this point, we could find the best possible values for $d_i$ but if
we make an educated guess and choose $d_1= 1$, $d_2=4$ and $d_3 = 6$
we will already obtain a good enough estimate:
\begin{equation}\label{eqn498}
B_4(1,4,6) \leq 0.02795
\end{equation}
This concludes the proof of the Theorem \ref{CSIthm}.

\subsection{Extrapolation to an interval}\label{interval}

What remains to complete the proof of Theorem \ref{main} is to
extend the solution to a small interval around $\alpha = -16$.
Notice that in the proof of Theorem \ref{CSIthm}, every bound that
we need depends only on quotients of minors of the matrix $N$. All
these determinants vary continuously with the values of the sequence
$\omega$, and since they do not become 0 when $\alpha=-16$, all the
bounds depend continuously on the value of $\alpha$ around
$\alpha=-16$. On the other hand, following the same path described
in the proof for a different choice of $\omega$ will also lead to a
solution to the linear equations in the system \eqref{eqn430}, that
determine which values of parameters we can try in the minimization
problems. It is only the values of $E_i$ and $G_i$ (and hence those
of their derived quantities, $H_i$, $D_i$ and $C_i$) that will vary,
affecting the minimization problem.

Because of this, the optimal value for the minimization problems
will depend continuously on each value $\omega_t$, for $t \in \N$
(as well as on the choice of $d_1$, $d_2$ and $d_3$). This means
that there must exist an interval around $\alpha=-16$ for which
$B_2$ stays below 1. Theorem \ref{main} is now completely proved.

\subsection{Explicit recovery of the parameters}

It is our intention now to recover explicitly the numerical values
of the parameters intervening in the previous subsections, leading
to the proof of Theorem \ref{main}, although the latter has already
been established. We do this for completion, and also, in order to
pave the path for a blind but very simple and explicit proof of the
Theorem \ref{CSIthm}. Such proof consists on simply plugging the
obtained values for $a_i$ and $b_i$ into the equations
\eqref{eqn430} and then check that the corresponding value of $B_0$
is less than 1.

At the end of the previous subsection we chose the values $d_1=1$,
$d_2=4$ and $d_3=6$ which implies that we can take $a_1=1$, $a_2=2$
and $a_3= \sqrt{6}$ to solve the optimization problem in
\eqref{eqn400} (recall $a_i$ were taken positive real and $d_i$ were
their squares). We had also previously chosen $d_0=1$, and hence,
$a_0=1$. The objective function was bounded in \eqref{eqn498} by
$4C_2C_5/C_1 < 1 - \varepsilon = 0.02795,$ which yields a value of
$\varepsilon = 0.97205$. From these values we can bootstrap and
recover each number that was needed in the proof, explicitly. We
give the results we obtain in Table \ref{table4}. Each number on
that table can be found before those other numbers below it. The
choice for $Z_3$ is arbitrary within some region, and recall that
$D_i \in \R^{+}$. The value of $A_{1,5}$ can be obtained if we
assume \eqref{eqn401}, from the fact that \[|A_{1,2}|^2 =
\frac{|A_{1,5}|^2}{|Z_1|^2} |C_1Z_3-C_3/2|^2 =
\frac{1}{|A_{1,5}|^2}.\] One can then, solve to obtain $A_{1,5}$ as
a function of the already known parameters.

\begin{table}\label{table4}
\caption{Values of the parameters in the proof} \centering
\begin{tabular}{c c c}
\hline \hline Notation & Approximate value & Upper bound \\
[0.5ex] \hline
 $C_4              $  &  $2.07 \cdot 10^{13}       $ &  \\
 $C_2              $  &  $3.372 \cdot 10^{-16}     $ &  \\
 $C_1              $  &  $3.379494 \cdot 10^{-14}  $ &  \\
 $C_3              $  &  $0.355785                 $ &  \\
 $Z_3              $  &  $-2 \cdot 10^{13}         $ &  \\
 $|C_1 Z_3 -C_3/2| $  &  $1.03168                  $ &  \\
 $C_5              $  &  $0.66791                  $ &  \\
 $\frac{C_5}{|C_1Z_3-\frac{C_3}{2}|^2}$  &           &   $0.628$  \\
 $B_0^2$              &                              &   $0.0463$ \\
 $B_0$                &                              &   $0.216$  \\
 $e_0$                &  $0.6474$                    &  \\
 $e_1$                &  $0.01351  $                 &  \\
 $Z_1$                &  $6.92$                      &  \\
 $A_{1,5}$            &  $2.59$                      &  \\
 $a_k$                &  $6.92$                      &  \\
 $a_{k+1}$            &  $-113.3$                    &  \\
 $a_{k+2}$            &  $227.1$                     &  \\
 $a_{k+3}$            &  $-207.2$                    &  \\
 $b_0$                &  $-7.5 \cdot 10^{12}$        &  \\
 $b_1$                &  $-2.53 \cdot 10^{13}$       &  \\
 $b_2$                &  $-1.28 \cdot 10^{14}$       &  \\
 $b_3$                &  $-8.67 \cdot 10^{13}$       &  \\
\hline
\end{tabular}
\end{table}

These are some values that will solve all the equations
\eqref{eqn310}, \eqref{eqn312}, \eqref{eqn315} and \eqref{eqn316}.
One could also want to find an explicit choice of $a_4$ and $b_5$
that makes everything work as described in Remark \ref{rem8}(3). It
is clear that only $A_{1,3}$ and $A_{1,4}$ will be affected by a
permutation in those coefficients (as compared to $A_{1,1},A_{1,2}$
and $A_{1,5}$). One can check that choosing $a_4=b_5 =1$ will not
affect that much the value of $A_{1,3}$ and $A_{1,4}$ and will in
fact be enough to keep satisfying \eqref{eqn316} (notice that in the
expression of the norm $A_{1,3}$, very small weight $11^{-16}$ is
assigned to the coefficient $|a_4|^2$, as well as to $A_{1,4}$
respectively with $12^{-16}$ and $|b_5|^2$).

\begin{rem}
A shorter but blind proof is possible. Once recovered the values of
the parameters as obtained in the previous subsection, it would be
enough, in order to prove Theorem \ref{CSIthm}, to check its
validity for those parameters obtained. Although this may very well
yield a valid proof, we find more illustrative to describe the
method to obtain explicitly those parameters.
\end{rem}

\subsection{Proof of Corollary}\label{pronew}

As mentioned in Section \ref{interval}, the proof of Theorem
\ref{CSIthm} (and thus, that of Theorem \ref{main}) depend only on
properties of the sequence $\omega$: indeed let $\omega \subset
\R^+$ be a sequence with $\omega_0 =1$ and such that
\[\lim_{k \rightarrow \infty} \frac{\omega_k}{\omega_{k+1}} =1.\]
We refer to these as \emph{Hardy-type spaces}. Such spaces include
the $D_\alpha$ spaces, indeed, for the choice $\omega_k =
(k+1)^{\alpha}$ and they have been studied with regards to invariant
subspace properties such as cyclicity for the shift operator in
\cite{FMS}. The proof of the case $\alpha=-16$ in Theorem \ref{main}
extends directly without any relevant changes to any space for which
the corresponding matrix $N$ (that depends on the sequence $\omega$)
is the same as for $D_{-16}$. Notice that modifying the sequence
$\omega$ in a finite number of points will not affect \emph{which
functions form the space}, only the choice of equivalent norm is
affected. Hence, fixed $k \geq 6$, one can modify any space
$D_\alpha$ to an equivalent norm by changing only 12 numbers of the
sequence $\omega$: those appearing in the definition of the matrix
$N$, namely, $\omega_k, ... , \omega_{k+3}, \omega_{2k}, ... ,
\omega_{2k+3}, \omega_{3k}, ... , \omega_{3k+3}$, by the
corresponding ones for $D_{-16}$.

\subsection{Proof for large spaces}\label{pronew2}

We follow the notation from the previous subsections. By choosing
$d_1=d_2=d_3=1$ and fixing $k \geq 9$ we are going to find that the
method described also yields the existence of
$\alpha_k^*=-(5k+\frac{700}{(k-9)^2})$ such that, for all $\alpha
\leq \alpha_k^*$ there exists a $z^k$ invariant subspace in
$D_\alpha$ without the corresponding wandering property, as stated
in Theorem \ref{9k28}. Denote $\beta=|\alpha|$ and we can suppose
$\beta  \geq 1 $. If we compute the determinants giving the values
of $G_1, G_2, G_3, E_1, E_2, E_3$ and denote by $1/d$ the
determinant of $N_1$, we obtain:
\[|G_1|= d ((2k+3)(3k+3))^{\alpha} ((1-\frac{1}{3k+4})^\beta -
(1-\frac{1}{2k+4})^\beta)\] A standard estimate will give
\[ |G_1| \leq d ((2k+3)(3k+3))^{\alpha} \frac{\beta
k}{(2k+4)(3k+4)}.\] In an analogous manner we can estimate all the
other elements $G_i$ to obtain
\[ |G_2| \leq ((2k+4)(3k+4))^{\alpha} \frac{2 d \beta
k}{(2k+2)(3k+2)},\] \[ |G_3| \leq  ((2k+2)(3k+2))^{\alpha} \frac{d
\beta k}{(2k+3)(3k+3)}.\]

Denote $E_0=1$. $E_i$ can also be described in terms of $d$ and
$\omega$ in a similar way by developing the determinants.

For $E_1$, for instance, we can see that
\[|E_1| = d( p_{1,1}(k)^\alpha - p_{2,1}(k)^\alpha + p_{3,1}(k)^\alpha -
p_{4,1}(k)^\alpha + p_{5,1}(k)^\alpha - p_{6,1}(k)^\alpha),\] where
each  $p_{i,1}$ is a polynomial of degree 3 of the form $p_{i,1}(k)
= 6 k^3 + a k^2 + b k + 12$. Since $\beta = -\alpha$ is very large,
we can see that the polynomial making the biggest contribution to
the determinant in the numerator of $E_1$ will be the smallest of
them. In this case, $p_{1,1} = (k+1)(2k+3)(3k+4)= 6 k^3 + 23 k^2 +
29k +12$. The second smallest is $p_{2,1}= (k+1)(2k+4)(3k+3) =  6
k^3 + 24 k^2 + 30k +12$, and we are going to see that
$p_{1,1}^\alpha$ is in fact much larger than $2p_{2,1}^\alpha
> p_{2,1}^\alpha+p_{4,1}^\alpha$, where $p_{4,1}^\alpha > p_{6,1}^\alpha$ is the next biggest. Let $s \in (0,1)$:
\begin{equation}\label{eqn700}
sp_{1,1}^\alpha \geq 2 p_{2,1}^\alpha.\end{equation} Then,
\[p_{1,1}^\alpha > (1-s)p_{1,1}^\alpha + p_{2,1}^\alpha +
p_{4,1}^\alpha.\] The other terms will make a positive contribution,
since $p_{3,1}^\alpha$ is summed to the result and $p_{5,1}^\alpha >
p_{6,1}^\alpha$. Altogether, this gives that
\begin{equation}\label{eqn701}
|E_1| \geq d (1-s) p_{1,1}^\alpha.
\end{equation}
This is in fact, quite sharp, since we always have $|E_1| \leq 3d
p_{1,1}^\alpha$. Let us determine what are the values of $s \in
(0,1)$ for which \eqref{eqn700} holds: We  want
\[\frac{p_{1,1}(k)}{p_{2,1}(k)} \leq (\frac{s}{2})^{1/\beta}.\]
The left hand side is equal to $(1-\frac{k}{(2k+4)(3k+3)})$. Taking
logarithms on both sides and using the standard estimate $\log (x+1)
\leq x$, will give the following sufficient condition for $s$ to
satisfy \eqref{eqn700}:
\begin{equation}\label{eqn702}
\beta \geq 6 \frac{(k+1)}{k} (k+2) \log (2/s).
\end{equation}
The same principle may be applied to $|E_2|$ and $|E_3|$. However,
what polynomials play the role of $p_{1,1}$ and $p_{2,1}$ will
depend on each case. In any case, we will obtain sufficient
conditions analogous to \eqref{eqn702} for $s$ to satisfy an
analogous to \eqref{eqn701}, and it can be checked that these
sufficient conditions are actually less restrictive than
\eqref{eqn702}. This method may also be applied to estimate $d$. The
best upper estimate for $d$ will be
\begin{equation}\label{eqn703}
d \leq \frac{p_{1,*}^\beta}{1-s},\end{equation} where
\[p_{1,*}(k)= (k+2)(2k+3)(3k+4)= 6k^3 + 29k^2 + 46k + 24.\]

A first conclusion is that for all $i=0,1,2,3$ we have the bounds
\begin{equation}\label{eqn704}
(1-s)/3 \leq |E_i| \leq \frac{3}{1-s}
\left(\frac{p_{1,3}(k)}{p_{1,*}(k)}\right)^\alpha.
\end{equation}

Now we are ready to study the objective function $4C_2C_4$.

The bounds
\[C_2 \leq 4 \omega_{2k}  \sup_{i=0,1,2,3} H_i\]
\[C_4 \leq 3 \omega_{k+1} \sup_{i=1,2,3} D_i^2\]
provide a bound for the objective function:
\[4C_2C_4 \leq  48 \omega_{2k} \omega_{k+1} \sup_{i,j=0,1,2,3}
\left(\frac{G_iE_i}{E_j}\right)^2.\] The supremum among $G_i^2$ is
clearly dominated by
\[G^2 := 4 (\beta k d )^2 ((2k+2)(3k+2))^{2\alpha-2} \leq \frac{d^2 \beta^2}{9k^2} \omega_{2k+1}^2 \omega_{3k+1}^2 .\]
From this we can obtain another estimate for the objective function:
\[4C_2C_4 < \frac{48 (\beta \omega_{2k+1} \omega_{3k+1} d)^2
\omega_{2k}\omega_{k+1}}{9 k^2} \sup_{i,j=0,1,2,3}
\left(\frac{E_i}{E_j}\right)^2.\]

Now we may plug in the estimates from above and below for $|E_i|$
obtained in \eqref{eqn704}, which yields
\[4C_2C_4 < \frac{432 \beta^2 \omega_{2k}}{(1-s)^4 k^2 \omega_{k+1}}
\left(\frac{q_2(k)}{q_1(k)}\right)^{2\beta},\] where \[q_1(k)=
(k+1)^4(k+2/3), \quad q_2(k)= (k+2)(k+3/2)^2(k+4/3)^2.\]

This means that the objective function can be bounded by the factor
$\frac{432 \beta^2}{(1-s)^4 k^2} a^\beta$ where $a$ is of the form
\[a=
\left(\frac{1}{2}+\frac{3}{2(2k+1)}\right) \cdot
\left(1+\frac{1}{k+1}\right)^2 \cdot
\left(1+\frac{1}{2(k+1)}\right)^4 \cdot\] \[\cdot
\left(1+\frac{1}{3(k+1)}\right)^2 \cdot
\left(1+\frac{2}{3k+2}\right)^2.
\]
Each of the factors on the right hand side is decreasing on $k$ and
the limit as $k$ increases to $\infty$ is clearly $1/2$, so for some
$k_0 \in \N$ and  all $k \geq k_0$, $a^\beta$ will decay
exponentially fast with $\beta$. This is in fact true for $k \geq
10$, since $a(10)<1$. We have now 2 conditions on $s, k, \beta$ that
if satisfied, guarantee the existence of a $z^k$ invariant subspace
in $D_{-\beta}$ without the wandering property. The conditions
\eqref{eqn702} and that $\frac{432 \beta^2}{(1-s)^4 k^2} a^\beta <
1$.

For any $k \leq 18$, the choices $s=0,985$ and $\beta=5k$ achieves
both things (the bound on the objective function will be decaying on
$k$ and it holds for $k=18$). For $k=10,...,17$ we provide a list of
the corresponding choices for $\beta$ and $s$ that will satisfy both
conditions. Any choice of $\beta$ larger than the one provided will
make the objective function decay as well. The lower bound given for
$\beta$ guarantees \eqref{eqn702}.
\begin{table}\label{table5}
\caption{Solutions for $k \in [10,17]$} \centering
\begin{tabular}{c c c c c}
\hline \hline $k$ & $\beta$ & $s$ & Objective $<$ & $\beta >$
\\ [0.5ex] \hline
 10     &  530  &  0.05  &  0.994   &  293  \\
 11     &  165  &  0.3   &  0.612   &  162  \\
 12     &  120  &  0.6   &  0.387   &  110  \\
 13     &  104  &  0.8   &  0.490   &   89  \\
 14     &  98   &  0.9   &  0.556   &   83  \\
 15     &  90   &  0.93  &  0.562   &   84  \\
 16     &  87   &  0.94  &  0.864   &   87  \\
 17     &  88   &  0.97  &  0.502   &   88  \\
\hline
\end{tabular}
\end{table} See Table \ref{table5}. All the corresponding choices of
$\beta$ are bounded by $5k + \frac{700}{(k-9)^2}$. This concludes
the proof of the Theorem \ref{9k28}.

\section{Other numerical results}\label{numerics}

In the proof of the Theorem \ref{CSIthm}, we have made an assumption
that we have not mentioned anything about: the fact that the degrees
for which $F_1$ and $F_2$ have non-zero coefficients are of the form
$0, ..., 5$ and $k, ..., k+3$. The same proof we followed could a
priori work for any other functions $F_1$ and $F_2$ of the form:

\[F_1(z)= \sum_{i=0}^4 a_i z^{\gamma_i} + \sum_{i=0}^3 a_{k+i} z^{k +
\gamma_i},\] and \[ F_2(z)= \sum_{i=0}^3 b_i z^{\gamma_i} + b_5
z^{\gamma_5},\] provided that $a_i$ and $b_i$ satisfy some equations
similar to \eqref{eqn430} but modified accordingly and that
$\gamma_0,...,\gamma_5$ satisfy, whenever $i \neq j$, that
\[\label{eqn600} \gamma_i \neq \gamma_j \mod k.\]

We ran numerical experiments on the second objective function $B_3$
(as in \eqref{eqn601}) for different values of $\alpha$, $k$, $d_1,
d_2, d_3$ and $\gamma_0, ... , \gamma_3$. We indicate in Table
\ref{table1} the most significant results. Recall that $B_1 < B_2$
and that a value for $B_1$ below 1 implies the existence of a $z^k$
invariant subspace without the wandering property. The values of
$\gamma_i$ have been chosen of the form
\[\gamma_i = \phi_i k + i.\] The values indicated there for several of the
variables were selected in order to guarantee the objective function
is clearly below 1. We keep $d_0=1, \gamma_0=0$,
 and $\gamma_1=1$ in all cases. Hence, we only show the values of $\alpha$, $\phi_2$, $\phi_3$,
 $d_1$, $d_2$, $d_3$ and $B_1$, and a larger value of $B_1$ typically indicates that for that value of $\alpha$ it was more difficult
 to find a suitable example.
Table \ref{table1} only contains values that achieved the objective
for $k=6$.

\begin{table}\label{table1}
\caption{Numerical results for $k=6$} \centering
\begin{tabular}{c c c c c c c}
\hline \hline $\alpha$ & $\phi_2$ & $\phi_3$ & $d_1$ & $d_2$ & $d_3$
& $B_1$ \\ [0.5ex] \hline
 -16     &  0  &  0  &  1   &  4  &       6  & 0.02324  \\
 -16     &  0  &  3  &  1   & 10  &    2000  & 0.00667  \\
 -12     &  1  &  4  &  1   & 20  &    5000  & 0.02397  \\
  -8     &  1  &  8  &  1   & 10  &    5000  & 0.1525   \\
  -7     &  2  & 12  &  1   & 20  &   10000  & 0.31668  \\
  -6     &  3  & 17  &  0.2 & 13  &   16000  & 0.5635   \\
  -5     &  2  & 34  &  4   & 11  &  100000  & 0.99826  \\
  -4.999 &  2  & 34  &  4   & 11  &  100000  & 0.999006 \\
\hline
\end{tabular}
\end{table}

Without varying $k$ from $k=6$ to higher values it was very costly
to determine adequate values for the other parameters for values of
$\alpha > -5$. Allowing for higher values of $k$ made it possible
for the barrier to be moved to $\alpha = -4.2$, as can be seen in
the following Table \ref{table2}.

\begin{table}\label{table2}
\caption{Numerical results for $k>6$} \centering
\begin{tabular}{c c c c c c c c}
\hline \hline $\alpha$ & $\phi_2$ & $\phi_3$ & $d_1$ & $d_2$ & $d_3$
& $k$ & $B_1$ \\ [0.5ex] \hline
 -5    &  2  &  34  &     4   & 11     &   100000  &    7  & 0.875    \\
 -5    &  3  &  35  &     2.2 & 16     &   130000  &   10  & 0.71312  \\
 -4.5  &  3  &  50  &  1000   & 11     &   150000  &   12  & 0.96775  \\
 -4.25 &  3  &  97  &    70   &  0.54  &    70000  &   47  & 0.99436  \\
 -4.22 &  3  & 150  &  5000   &  0.2   &   150000  &   74  & 0.986    \\
 -4.2  &  3  & 166  & 10000   &  0.142 &   150000  &   88  & 0.999
  \\
\hline
\end{tabular}
\end{table}

Since we did not perform validation of the numerics for the
experiments in this Section, the results should be taken not as a
proof but rather as a hint and the values for which $B_1$ is close
to 1 should be taken with special care. However, our results suggest
that increasing $\alpha$ makes the problem harder and there is no
particular reason to expect a much higher threshold than
$\alpha=-4.2$. In any case, several choices have been made in our
strategy of proof that limit the scope of the method. The chosen
values of $\phi_2$, $\phi_3$, $d_1$, $d_2$, $d_3$, and $k$ for each
$\alpha$ were chosen by experimentation with a variational spirit.
We were unable to find any rationale on their behavior.

\section{Further remarks}\label{rema}

\begin{itemize}
\item[(1)] Corollary \ref{coro} shows that the wandering property is
not a property of the ambient space, as a set, but rather of the
choice of equivalent norm. In this way, the Wold decomposition seems
to be an obstruction only when we are dealing with the Hardy space
$H^2$ (and perhaps spaces where the sequence $\omega$ is
increasing).

\item[(2)] It seems that a key feature of the values $k=6$ and
$\alpha=-16$ is that $|\alpha|$ is large enough for $k$. This is
clearly the case in the values of $\alpha <
-(5k+\frac{700}{(k-9)^2})$ when $k\geq 10$, from Theorem \ref{9k28}.
One reasonable direction of research from here is to determine
whether the wandering property in $D_\alpha$ spaces also fails for
$k=1,...,9$. In any case, it seems plausible from all our
observations that for each $k$ there should be a value $\alpha_k^*$,
such that for all $\alpha < \alpha_k^*$, there exist $z^k$ invariant
subspaces in $D_\alpha$ (with their usual norms) without the
wandering property, while for $\alpha > \alpha_k^*$ the wandering
property holds. Perhaps a study of the derivatives of $C_2$ and
$C_4$ in the proof of the Theorem \ref{CSIthm} will give information
about this question. To deepen in this idea, let us comment that, of
the possible spaces of the form $H^2_\omega$, we have seen that many
contain a $z^k$ invariant subspace without the wandering property.
If we consider the set of all eligible sequences $\omega$, this is
true of at least a set of codimension 12. In fact, for each $t \in
\N$ and for each $\omega$ for which there is a solution, variation
of the minimization problem with respect to $\omega_t$ will also be
continuous, so a small interval may be taken around $\omega_t$ that
does not affect the existence of the non-wandering subspace. In
Section \ref{numerics}, even more ways to generate such subspaces
are described. This seems to point in the direction that if
$\omega_t$ \emph{decays fast enough, enough times} there should be
$z^k$-invariant subspaces without the wandering property. The
opposite direction (not fast enough decay or not enough times,
implying wandering property) will be studied in more detail in
\cite{GGPS}.

\item[(3)] $F_1$ and $F_2$ were chosen as polynomials with a very
particular restriction on their coefficients (in Theorem \ref{main},
$F_2$ is of degree 5, and $F_1$ of degree $k+3$ with several more
constraints). It seems plausible to find other arrangements where,
for instance, $F_2$ is of the form \[F_2(z)= b_k z^k + ... +
b_{k+3}z^{k+3} + b_{k+5} z^{k+5}\] instead, or where both functions
include larger order coefficients, or where these depend more on the
values of $\alpha$ and $k$. One may consider, as well, spaces
generated by 3 or more functions, or non-polynomial functions. Each
of these routes complicates the computations but provides hope of
solving more cases than the ones presented here. We also
deliberately ignored some degenerate solutions to some of the linear
systems without exploring them completely. In any case, our proof
does rely on the terms described and the proposed modifications
would likely affect the procedure considerably.

\item[(4)] The main role of the special coefficients $a_4$ and $b_5$
is to ensure a ``register'' of the functions $f_1$ and $f_2$
defining a function $f$ in the subspace $M$. This register was
encoded here in the Taylor coefficients of $f$ using the fact that
the closed span of $\{z^{ks+4}\}_{s \in \N}$ is an invariant
subspace under $z^k$ in $D_\alpha$ (respectively, $\{z^{ks+5}\}_{s
\in \N}$ for $b_5$). This proved useful in Lemma \ref{Lemma1} and
several times after that. If one can find another way of doing this
register without blocking so many coefficients only for that
purpose, they may be able to increase the efficacy of our method to
make it work for $k=4$ or $k=5$. This could be based on some
division lemmas in the style of Lemma 5.2 in \cite{GGR}. Notice that
some steps would need to be reproved like the fact that
Cauchy-Schwarz inequality was strict when applied on Lemma
\ref{Lemma5} and Lemma \ref{Lemma6}. Our method does not seem to go
any further than $k =4$ since we need the matrix $N$ to be at least
of 4 columns wide for nontrivial solutions to exist to the linear
system it induces, unless there are redundancies in the matrix $N$.
Therefore, for the cases $k=2$ and $k=3$ an essential modification
of our method seems necessary.

\item[(5)] Up to an equivalent norm, $D_{-16}$ is the same space as
the weighted Bergman space \[A_{15}^2 = \{ f \in Hol(\D): \|f\|^2 =
\int_{\D} |f(z)|^2(1-|z|^2)^{15}dA(z) < \infty\},\] which is another
example of a $H^2_\omega$ space. One seems inclined to wonder
whether the counterexample presented here still fails to have the
wandering property in this other norm, and if not, whether any other
counterexample may be built in a similar spirit. The failure of the
wandering property for the shift (the case $k=1$) was shown for
$A_{\beta}^2$ for all values of $\beta \geq 4$ in \cite{Netal}, so
it seems plausible that the same threshold can be achieved for other
values of $k$. In general, the same questions make sense for the
standard Bergman-type spaces $A^2_\beta$ (given by weights
$(1-|z|^2)^\beta$, for $\beta
>-1$). Only when $\beta=0$ this coincides with the norm of the
corresponding $D_\alpha$ space ($\alpha= -\beta -1$). Our numerical
results in the previous section (for $D_\alpha$ spaces and some $k
\geq 6$) seem to agree with the results of \cite{Netal} for $k=1$
and the corresponding $A^2_\beta$ spaces.

\end{itemize}

\noindent\textbf{Acknowledgements.} The author is grateful to Eva
Gallardo-Guti\'errez, who provided many forms of support for this
work, while acting as a mentor for the postdoctoral grant funded by
the Severo Ochoa Programme for Centers of Excellence in R\&D
(SEV-2015-0554) at ICMAT. We also acknowledge support from the
Spanish Ministry of Economy and Competitiveness, through grant
MTM2016-77710-P. Finally, we thank W. Ross for useful comments on a
previous version of this article.

\end{document}